\documentclass[12pt]{amsart}
\usepackage{hyperref}
\usepackage{epsfig}
\hyphenation{Ha-da-mard}

\textwidth=160mm
\textheight=200mm
\topmargin=10mm
\hoffset=-20mm

\newtheorem{theorem}{Theorem}[section]
\newtheorem{definition}[theorem]{Definition}
\newtheorem{proposition}[theorem]{Proposition}
\newtheorem{lemma}[theorem]{Lemma}
\newtheorem{corollary}[theorem]{Corollary}
\newtheorem{problem}[theorem]{Problem}
\newtheorem{conjecture}[theorem]{Conjecture}

\begin{document}

\title{Almost Hadamard matrices: the case of arbitrary exponents}

\author{Teodor Banica}
\address{T.B.: Department of Mathematics, Cergy-Pontoise University, 95000 Cergy-Pontoise, France. {\tt teodor.banica@u-cergy.fr}}

\author{Ion Nechita}
\address{I.N.: CNRS, Laboratoire de Physique Th\'eorique, IRSAMC, Universit\'e de Toulouse, UPS, 31062 Toulouse, France. {\tt nechita@irsamc.ups-tlse.fr}}

\subjclass[2000]{05B20 (15B10)}
\keywords{Hadamard matrix, Orthogonal group}

\begin{abstract}
In our previous work, we introduced the following relaxation of the Hadamard property: a square matrix $H\in M_N(\mathbb R)$ is called ``almost Hadamard'' if $U=H/\sqrt{N}$ is orthogonal, and locally maximizes the 1-norm on $O(N)$. We review our previous results, notably with the formulation of a new question, regarding the circulant and symmetric case. We discuss then an extension of the almost Hadamard matrix formalism, by making use of the $p$-norm on $O(N)$, with $p\in[1,\infty]-\{2\}$, with a number of theoretical results on the subject, and the formulation of some open problems.
\end{abstract}

\maketitle

\tableofcontents

\section*{Introduction}

An Hadamard matrix is a square matrix $H\in M_N(\pm 1)$ having its rows pairwise orthogonal. The Hadamard conjecture (HC), which is over a century old, states that such matrices exist, at any $N\in 4\mathbb N$. See \cite{aga}, \cite{hor}, \cite{kta}, \cite{lgo}. The circulant Hadamard conjecture (CHC), which is half a century old \cite{rys}, states that a circulant Hadamard matrix can exist only at $N=4$. More precisely, only the following matrix $K_4$ and its various ``conjugates'' can be at the same time circulant and Hadamard, regardless of the size $N\in\mathbb N$:
$$K_4=\begin{pmatrix}-1&1&1&1\\ 1&-1&1&1\\ 1&1&-1&1\\ 1&1&1&-1\end{pmatrix}$$

An interesting generalization of the Hadamard matrices are the complex Hadamard matrices, namely the matrices $H\in M_N(\mathbb T)$, where $\mathbb T$ is the unit circle, having their rows pairwise orthogonal. These matrices appear in several contexts, see \cite{ha1}, \cite{jon}, \cite{jsu}, \cite{pop}, \cite{tz1}, \cite{tao}, \cite{wer}. The main example is the rescaled Fourier matrix ($w=e^{2\pi i/N}$):
$$F_N=\begin{pmatrix}
1&1&1&\ldots&1\\
1&w&w^2&\ldots&w^{N-1}\\
\ldots&\ldots&\ldots&\ldots&\ldots\\
1&w^{N-1}&w^{2(N-1)}&\ldots&w^{(N-1)^2}
\end{pmatrix}$$

This example prevents the existence of a complex analogue of the HC. However, when trying to build complex Hadamard matrices with roots of unity of a given order, a subtle generalization of the HC problematics appears \cite{but}, \cite{lle}, \cite{lau}. In relation now with the CHC, there has been some interesting work here on the circulant case \cite{bjo}, \cite{bfr}, \cite{ha2}. Also, much work has gone into various geometric aspects, see \cite{ban}, \cite{bbe}, \cite{krs}, \cite{szo}, \cite{tz2}.

Yet another generalization comes from \cite{bcs}, \cite{bnz}. The original observation from \cite{bcs} is that for an orthogonal matrix $U\in O(N)$ we have $||U||_1\leq N\sqrt{N}$, with equality if and only if $H=\sqrt{N}U$ is Hadamard. This follows indeed from the Cauchy-Schwarz inequality:
$$||U||_1=\sum_{ij}|U_{ij}|\leq N\left(\sum_{ij}U_{ij}^2\right)^{1/2}=N\sqrt{N}$$

This simple fact suggests that a natural and useful generalization of the Hadamard matrices are the matrices of type $H=\sqrt{N}U$, with $U\in O(N)$ being a maximizer of the 1-norm. However, since such matrices are quite difficult to approach, most efficient is to study first the matrices of type $H=\sqrt{N}U$, with $U\in O(N)$ being just a local maximizer of the 1-norm. Such matrices are called ``almost Hadamard''. See \cite{bnz}.

One key feature of the almost Hadamard matrices is that at the level of examples we have a number of infinite series, uniformly depending on $N\in\mathbb N$. The basic example is:
$$K_N=\frac{1}{\sqrt{N}}
\begin{pmatrix}
2-N&2&\ldots&2\\
2&2-N&\ldots&2\\
\ldots\\
2&2&\ldots&2-N
\end{pmatrix}$$

Observe that $K_N$ is circulant, and that $K_4$ is Hadamard. Thus we are quickly led into the circulant Hadamard matrix problematics, and we have the following questions:

\medskip

\noindent {\bf Problem.} {\em What are the circulant Hadamard matrices? The circulant complex Hadamard matrices? The circulant almost Hadamard matrices?}

\medskip

More precisely, the CHC states that there are exactly 8 circulant Hadamard matrices, namely $K_4$ and its conjugates. Regarding the second question, Haagerup has shown in \cite{ha2} that for $N=p$ prime, the number of circulant complex Hadamard matrices, counted with certain multiplicities, is exactly $\binom{2p-2}{p-1}$, and the problem is to see what happens when $N$ is not prime. As for the third question, this appears from our previous work \cite{bnz}.

\begin{figure}[htbp]
\includegraphics[scale=0.7]{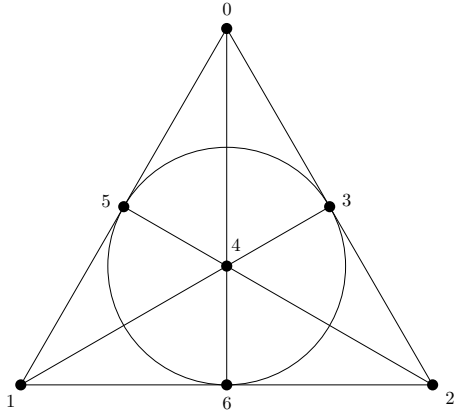}
\caption{The Fano plane}   
\end{figure}

Regarding this latter question, it was shown in \cite{bnz} that we have a number of interesting examples coming from block designs \cite{cdi}, \cite{sti}. The simplest one, coming from the adjacency matrix of the Fano plane (see the Figure), is as follows, with $x=2-4\sqrt{2}$, $y=2+3\sqrt{2}$:
$$I_7=\frac{1}{2\sqrt{7}}\begin{pmatrix}
x&x&y&y&y&x&y\\
y&x&x&y&y&y&x\\
x&y&x&x&y&y&y\\
y&x&y&x&x&y&y\\
y&y&x&y&x&x&y\\
y&y&y&x&y&x&x\\
x&y&y&y&x&y&x
\end{pmatrix}$$

Now back to the above 3 questions, the point is that, from the point of view of Fourier analysis, these are all related. Indeed, with $F=F_N/\sqrt{N}$, the circulant unitary matrices are precisely those of the form $U=FQF^*$ with $Q$ belonging to the torus $\mathbb T^N$ formed by the diagonal matrices over $\mathbb T$. So, in view of the above-mentioned remark about the 1-norm, all the above questions concern the understanding of the following potential:
$$\Phi:\mathbb T^N\to[0,\infty)$$
$$\ \ \ \ \ \ \ \ Q\to||FQF^*||_1$$

With this approach, the first thought goes to the computation of the moments of $\Phi$. Indeed, the global maximum, or more specialized quantities such as the exact number of maxima, can be recovered via variations of the following well-known formula:
$$\max(\Phi)=\lim_{k\to\infty}\left(\int_{\mathbb T^N}\Phi^k\right)^{1/k}$$

Of course, in respect to the above problems, one has to restrict sometimes attention to the torus $\mathbb T^n\subset\mathbb T^N$, with $n=\lfloor N/2 \rfloor + 1$, coming from the orthogonal matrices.

The origins of this approach go back to \cite{bcs}, where the potential $\Phi(U)=||U||_1$ was investigated over the group $O(N)$, in connection with the HC. Of course, the computation of moments over $O(N)$ is a quite complicated question \cite{bsc}, \cite{csn}. In the circulant case, however, the parameter space being just $\mathbb T^N$, the integration problem is much simpler. But it still remains very complicated, and we have no concrete results here so far.

So, instead of trying to understand the potential $\Phi:\mathbb T^N\to[0,\infty)$ directly via its moments and analysis, we should rather try to first develop a few geometric techniques. The point is that $\Phi$ has a number of symmetries, and when investigating these symmetries, the lattice $\{\pm 1\}^N\subset\mathbb T^N$ coming from the self-adjoint matrices seems to play a key role.

More precisely, we will study here the circulant and symmetric orthogonal matrices, which correspond via Fourier transform to the sequences $\alpha\in\{\pm 1\}^N$ satisfying $\alpha_i=\alpha_{-i}$. Our result here, motivated by the ``almost Hadamard'' problematics, is as follows:

\medskip

\noindent {\bf Proposition.} {\em Any circulant and symmetric matrix $U\in O(N)^*$ is a critical point of all $p$-norms on $O(N)$. The local maximizers of the $1$-norm can be counted up to $N=30$.}

\medskip

In this statement, $O(N)^*\subset O(N)$ is the set of orthogonal matrices having nonzero entries. For more comments on this result, we refer to the body of the paper.

Back to the general case now, one observation from \cite{bcs} is that one can replace the 1-norm by the $p$-norm, for any $p\neq 2$. Indeed, at $p<2$ the H\"older inequality gives:
$$||U||_p=\left(\sum_{ij}|U_{ij}|^p\right)^{1/p}\leq N^{2/p-1}\left(\sum_{ij}U_{ij}^2\right)^{1/2}=N^{2/p-1/2}$$

Thus for $U\in O(N)$ we have $||U||_p\leq N^{2/p-1/2}$, with equality if and only if the rescaled matrix $H=\sqrt{N}U$ is Hadamard. In the $p>2$ case a similar result holds, with the H\"older inequality applying in the reverse sense, and giving the estimate $||U||_p\geq N^{2/p-1/2}$.

So, we are led to the following notion, generalizing those in \cite{bcs}, \cite{bnz}:

\medskip

\noindent {\bf Definition.} {\em A square matrix $M\in M_N(\mathbb R)$ is called ``$p$-almost Hadamard'' if the rescaled matrix $U=H/\sqrt{N}$ is orthogonal, and is a local extremum of the $p$-norm on $O(N)$.}

\medskip

Here by ``local extremum'' we mean a local maximum at $p<2$, and a local minimum at $p>2$.  Also, we will call $H$ ``optimal'' if $U=H\sqrt{N}$ is a global maximum/minimum.  

One interest in this generalization comes from the exponent $p=4$, believed to be of interest in relation with quantum physics questions. Indeed, for $U\in O(N)$ we have:
$$d_{HS}\left((U_{ij}^2),J_N\right)^2
=\sum_{ij}\left(U_{ij}^2-\frac{1}{N}\right)^2 
=||U||_4^4-1$$

This computation shows that an orthostochastic matrix $A$ is ``almost flat'', in the sense that it minimizes the Hilbert-Schmidt distance to the flat matrix $J_N$, if and only if $A_{ij}=U_{ij}^2$, with $U$ being a global minimizer of the $4$-norm on $O(N)$. See \cite{be+}, \cite{dit}.

We will prove here that the $p$-almost Hadamard matrices can be detected by using linear algebra. We conjecture that the basic matrix $K_N$ always has this property.

The paper is organized as follows: in 1-2 we review the material in \cite{bnz}, and we discuss some new questions in the circulant case, and in 3-4 we perform some differential geometry computations, and we present a list of questions, that we don't know how to solve.

\medskip

\noindent {\bf Acknowledgements.} The present article is part of a series started in \cite{bcs}, in collaboration with Beno\^ it Collins and Jean-Marc Schlenker, and recently continued in \cite{bnz}, in collaboration with Karol \.Zyczkowski. It is our pleasure to thank all three for endless discussions and encouragements, and particularly Karol \.Zyczkowski for numerous recent discussions on the subject. The work of I.N. was supported by the ANR grant BS01 008 01.

\section{Almost Hadamard matrices}

We consider in this paper various square matrices $M\in M_N(\mathbb C)$. The indices of our matrices will vary in $\{0,1,\ldots,N-1\}$, and will be sometimes taken modulo $N$.

As explained in the introduction, a direct application of the Cauchy-Schwarz inequality shows that for $O(N)$ we have $||U||_1\leq N\sqrt{N}$, with equality if and only $H=\sqrt{N}U$ is Hadamard. In \cite{bnz} we have introduced the notion of almost Hadamard matrix:

\begin{definition}
An ``almost Hadamard'' matrix is a square matrix $H\in M_N(\mathbb R)$ such that $U=H/\sqrt{N}$ is orthogonal, and is a local maximum of the $1$-norm on $O(N)$. Equivalently, $U_{ij}\neq 0$, and the matrix $SU^t$, with $S_{ij}={\rm sgn}(U_{ij})$, must be positive definite.
\end{definition}

In this statement the equivalence between the two conditions comes from a number of differential geometry computations, for which we refer to \cite{bcs}, or \cite{bnz}.

As a first remark, any Hadamard matrix is almost Hadamard. In particular, given a number $N\in\{2\}\cup 4\mathbb N$ where the Hadamard Conjecture holds, the Hadamard matrices of order $N$ are precisely the almost Hadamard matrices $H\in M_N(\mathbb R)$ which are ``optimal'', in the sense that $U=H/\sqrt{N}$ is a global maximum of the 1-norm on $O(N)$.

The above definition provides a useful, flexible generalization of the quite rigid class formed by the Hadamard matrices. For instance at any $N\geq 3$ we have a number of concrete examples, which can be used for various purposes. The basic example is:
$$K_N=\frac{1}{\sqrt{N}}
\begin{pmatrix}
2-N&2&\ldots&2\\
2&2-N&\ldots&2\\
\ldots\\
2&2&\ldots&2-N
\end{pmatrix}$$

This matrix has several remarkable properties, for instance it is at the same time circulant and symmetric. Also, it has at most 2 entries, which are both nonzero.

So, let us look now more in detail at the matrices having similar properties. We recall from \cite{bnz} that an $(a,b,c)$ pattern is a matrix $M\in M_N(x,y)$, with $N=a+2b+c$, such that any two rows of our matrix look as follows, up to a permutation of the columns:
$$\begin{matrix}
x\ldots x&x\ldots x&y\ldots y&y\ldots y\\
\underbrace{x\ldots x}_a&\underbrace{y\ldots y}_b&\underbrace{x\ldots x}_b&\underbrace{y\ldots y}_c
\end{matrix}$$

Observe that the above matrix $K_N$ comes from a $(0,1,N-2)$ pattern. There are many other examples, the main result here being that the adjacency matrix of any $(N,a+b,a)$ symmetric balanced incomplete block design is an $(a,b,c)$ pattern. See \cite{bnz}.

The following result was proved in \cite{bnz}: 

\begin{proposition}
Let $U=U(x,y)$ be orthogonal, coming from an $(a,b,c)$ pattern.
\begin{enumerate}
\item $U$ is a critical point of the $1$-norm on $O(N)$.

\item $H=\sqrt{N}U$ is almost Hadamard iff $(N(a-b)+2b)|x|+(N(c-b)+2b)|y|\geq 0$.
\end{enumerate}
\end{proposition}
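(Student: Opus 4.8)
The plan is to use the criticality-and-positive-definiteness characterization from Definition~1.1, exploiting the fact that for a pattern matrix the sign matrix $S$ takes only the two values $\mathrm{sgn}(x),\mathrm{sgn}(y)$. First I would compute $SU^t$ directly from the pattern. Its diagonal entries are $\sum_k|U_{ik}|=(a+b)|x|+(b+c)|y|=:\delta$, constant because every row has exactly $a+b$ entries equal to $x$ and $b+c$ entries equal to $y$. For $i\neq j$, the four blocks of the pattern (the $a$ positions where both rows equal $x$, the two $b$ blocks where the rows disagree, and the $c$ positions where both equal $y$) give
$$(SU^t)_{ij}=a|x|+c|y|+b\big(\mathrm{sgn}(x)\,y+\mathrm{sgn}(y)\,x\big)=:\epsilon,$$
again constant. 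Swapping $i$ and $j$ merely interchanges the two $b$ blocks, which leaves this expression unchanged; hence $SU^t$ is symmetric, and since its entries are independent of the chosen pair of rows we obtain $SU^t=(\delta-\epsilon)I_N+\epsilon J_N$, where $J_N$ is the all-ones matrix. This proves (1), symmetry of $SU^t$ being exactly the first-order (critical point) condition for the $1$-norm.

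For (2) I would diagonalize $SU^t=(\delta-\epsilon)I_N+\epsilon J_N$: its eigenvalues are $\delta-\epsilon$ with multiplicity $N-1$ (on the orthogonal complement of the all-ones vector) and $\delta+(N-1)\epsilon$ with multiplicity $1$. By Definition~1.1, $H=\sqrt N\,U$ is almost Hadamard precisely when $SU^t$ is positive definite, i.e. when both eigenvalues are positive, so the task reduces to simplifying these two inequalities to the single stated one.

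The key simplifying input is the orthogonality of $U$, which I would read off the pattern as the inner product of two distinct rows: $ax^2+2bxy+cy^2=0$. Since a genuine orthogonal pattern forces $b\geq 1$ (otherwise distinct rows coincide) and $a,c\geq 0$, this identity is impossible when $xy>0$; as the entries are nonzero, this leaves $xy<0$, so $x$ and $y$ have opposite signs. Consequently $\mathrm{sgn}(x)\,y+\mathrm{sgn}(y)\,x=-(|x|+|y|)$, which collapses $\epsilon$ to $(a-b)|x|+(c-b)|y|$. Substituting, the first eigenvalue becomes $\delta-\epsilon=2b(|x|+|y|)>0$, so it is automatically positive and imposes no condition, while the second simplifies to
$$\delta+(N-1)\epsilon=\big(N(a-b)+2b\big)|x|+\big(N(c-b)+2b\big)|y|,$$
whose positivity is precisely the asserted inequality, establishing (2).

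The step I expect to require the most care is the sign bookkeeping: deducing the opposite-sign property of $x,y$ from orthogonality, and then collapsing the sign-dependent cross term $\mathrm{sgn}(x)\,y+\mathrm{sgn}(y)\,x$ to $-(|x|+|y|)$ uniformly across the two cases $x>0>y$ and $x<0<y$. A secondary point to handle cleanly is the gap between ``positive definite'' (strict) and the stated non-strict inequality $\geq 0$: strict positivity of $\delta+(N-1)\epsilon$ yields the local maximum, whereas the boundary case $\delta+(N-1)\epsilon=0$ gives only a degenerate, positive semidefinite critical point, so one should fix the convention for such borderline patterns and phrase the conclusion accordingly.
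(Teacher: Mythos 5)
Your proof is correct and follows essentially the same route as the paper: you compute $SU^t$ entrywise from the pattern, observe it equals a linear combination of the identity and the all-ones matrix, and reduce positive definiteness to the two eigenvalue conditions, of which $\delta-\epsilon=2b(|x|+|y|)>0$ is automatic and $\delta+(N-1)\epsilon\geq 0$ is the stated inequality (the paper phrases the same computation via the orthogonal projections $J_N$ and $1_N-J_N$). Your explicit derivation of $xy<0$ from $ax^2+2bxy+cy^2=0$ and your remark on the strict-versus-nonstrict discrepancy between ``positive definite'' and the stated $\geq 0$ are details the paper delegates to its reference or glosses over, and both are handled correctly.
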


\begin{proof}
Since any row of $U$ consists of $a+b$ copies of $x$ and $b+c$ copies of $y$, we get:
$$(SU^t)_{ij}
=\begin{cases}
(a+b)|x|+(b+c)|y|&(i=j)\\
(a-b)|x|+(c-b)|y|&(i\neq j)
\end{cases}$$

Thus $SU^t$ is symmetric, and by \cite{bcs} our matrix $U$ is a critical point of the 1-norm. Regarding now the second assertion, observe first that we can write $SU^t$ as follows:
\begin{eqnarray*}
SU^t
&=&2b(|x|+|y|)1_N+((a-b)|x|+(c-b)|y|)NJ_N\\
&=&2b(|x|+|y|)(1_N-J_N)+((N(a-b)+2b)|x|+(N(c-b)+2b)|y|))J_N
\end{eqnarray*}

Now since  $1_N-J_N,J_N$ are orthogonal projections, we have $SU^t>0$ if and only if the coefficients of these matrices are both positive, and this gives the result.
\end{proof}

Let us go back now to our observation that $K_N$ is at the same time circulant and symmetric, and look in detail at the matrices having these two properties. We fix $N\in\mathbb N$, and we denote by $F=F_N \in U(N)$ the Fourier matrix, $F=(w^{ij})/\sqrt{N}$ with $w=e^{2\pi i/N}$. Also, given a vector $q\in\mathbb C^N$, we associate to it the diagonal matrix $Q=\mathrm{diag}(q_0,\ldots,q_{N-1})$.

\begin{lemma}
For a matrix $H\in M_N(\mathbb C)$, the following are equivalent:
\begin{enumerate}
\item $H$ is circulant, i.e. $H_{ij}=\gamma_{j-i}$, for a certain vector $\gamma\in\mathbb C^N$.

\item $H$ is Fourier-diagonal, i.e. $H=FDF^*$, with $D\in M_N(\mathbb C)$ diagonal.
\end{enumerate}
In addition, if so is the case, then with $D=\sqrt{N}Q$ we have $\gamma=Fq$.
\end{lemma}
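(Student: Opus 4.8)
The plan is to prove the two implications separately and then extract the relation between $\gamma$ and $q$ by matching Fourier coefficients. I would begin with the implication $(2)\Rightarrow(1)$, which is essentially a one-line computation. Writing $D=\mathrm{diag}(d_0,\ldots,d_{N-1})$ and using that $F$ is symmetric, so that $F^*_{kj}=w^{-kj}/\sqrt N$, I would expand
$$(FDF^*)_{ij}=\sum_k F_{ik}\,d_k\,F^*_{kj}=\frac1N\sum_k d_k\,w^{k(i-j)}.$$
The right-hand side depends on $i,j$ only through $i-j$, taken modulo $N$, so $H=FDF^*$ is automatically circulant, and the associated vector can be read off directly from this formula.

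For the converse $(1)\Rightarrow(2)$, I would exploit that $F$ is unitary, so that it suffices to set $D=F^*HF$ and check that $D$ is diagonal. Inserting $H_{ij}=\gamma_{j-i}$ and substituting $m=j-i$, the double sum factors as
$$(F^*HF)_{kl}=\frac1N\sum_m\gamma_m\,w^{ml}\sum_i w^{i(l-k)},$$
and the inner sum equals $N$ when $k=l$ and $0$ otherwise, by orthogonality of the characters $i\mapsto w^{i(l-k)}$. Hence $D=F^*HF$ is diagonal and $H=FDF^*$, as desired. Alternatively one can argue by dimension: the circulant matrices and the Fourier-diagonal matrices each form an $N$-dimensional subspace of $M_N(\mathbb C)$, and $(2)\Rightarrow(1)$ together with the injectivity of the map $D\mapsto FDF^*$ forces the two subspaces to coincide.

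Finally, for the additional claim I would put $D=\sqrt N Q$ and read off the diagonal entries of $D$ from the computation above: the surviving term expresses each $d_k$ as a weighted sum of the $\gamma_m$ against powers of $w$, which is exactly a discrete Fourier transform. Inverting it, and respecting the index conventions of the statement, yields $\gamma=Fq$. I expect the only delicate point to be the bookkeeping of the signs in the exponents and the reductions modulo $N$; there is no genuine obstacle beyond the character orthogonality relation, which is what drives both implications.
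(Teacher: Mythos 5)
Your proof is correct and follows essentially the same route as the paper: both implications are obtained by conjugating with the unitary $F$, changing variables in the double sum, and invoking orthogonality of characters, with the relation between $\gamma$ and $q$ then read off from the resulting formula. The one delicate point you flag is real but harmless: the raw computation gives $q_k=\frac{1}{\sqrt{N}}\sum_m w^{mk}\gamma_m$, so that strictly $\gamma=F^*q$, i.e.\ $\gamma=Fq$ only up to the index flip $k\mapsto -k$ --- an imprecision equally present in the paper's own proof, and immaterial for its applications.
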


\begin{proof}
This result is well-known, and the proof goes as follows:

(1)$\implies$(2) The matrix $D=F^*HF$ is indeed diagonal, given by:
$$D_{ij}=\frac{1}{N}\sum_{kl}w^{jl-ik}\gamma_{l-k}=\delta_{ij}\sum_rw^{jr}\gamma_r$$ 

(2)$\implies$(1) The matrix $H=FDF^*$ is indeed circulant, given by:
$$H_{ij}=\sum_kF_{ik}D_{kk}\bar{F}_{jk}=\frac{1}{N}\sum_kw^{(i-j)k}D_{kk}$$

Finally, the last assertion is clear from the above formula of $H_{ij}$.
\end{proof}

The following result is as well from \cite{bnz}:

\begin{proposition}
A circulant matrix $H\in M_N(\mathbb R^*)$, written $H_{ij}=\gamma_{j-i}$, is almost Hadamard if and only if the following conditions are satisfied:
\begin{enumerate}
\item The vector $q=F^*\gamma$ satisfies $q\in\mathbb T^N$.

\item With $\varepsilon={\rm sgn}(\gamma)$, $\rho_i=\sum_r\varepsilon_r\gamma_{i+r}$ and $\nu=F^*\rho$, we have $\nu>0$.
\end{enumerate}
\end{proposition}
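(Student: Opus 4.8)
The plan is to verify, one by one, the two requirements of Definition 1.1 — that $U=H/\sqrt N$ be orthogonal and that $SU^t$ be positive definite — and to convert each of them into a spectral condition through the Fourier picture of Lemma 1.6. The organizing observation is that every matrix that occurs ($H$, $U$, $U^t$, the sign matrix $S=\mathrm{sgn}(U)$, and their products) is circulant, hence all are simultaneously diagonalized by $F$; consequently each condition turns into a condition on Fourier eigenvalues, that is, on the discrete Fourier transforms of the relevant symbols.

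First I would dispose of orthogonality and of the nonvanishing of the entries. Writing $H=FDF^*$ with $D=\sqrt N\,Q$ as in Lemma 1.6, we get $U=H/\sqrt N=FQF^*$, where the diagonal of $Q$ is the Fourier transform $q=F^*\gamma$ of the symbol. Since $H$ is real by hypothesis, $U$ is real, so $U$ is orthogonal exactly when it is unitary, i.e.\ when $Q$ is unitary, i.e.\ when every entry of $q$ has modulus one; this is precisely condition (1). At the same time, the hypothesis $H\in M_N(\mathbb R^*)$ says $\gamma_k\neq0$ for all $k$, which is the requirement $U_{ij}\neq0$ of Definition 1.1, so nothing extra is needed on that point.

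The substance lies in the positive-definiteness of $SU^t$. Here I would first record that $S_{ij}=\mathrm{sgn}(U_{ij})=\mathrm{sgn}(\gamma_{j-i})=\varepsilon_{j-i}$, so that $S$ is circulant with symbol $\varepsilon$, while $U^t$ is circulant with symbol $k\mapsto\gamma_{-k}/\sqrt N$. Multiplying these two circulants and collecting terms by a short convolution computation yields $(SU^t)_{ij}=\frac{1}{\sqrt N}\rho_{i-j}$, so $SU^t$ is again circulant, with symbol governed by the vector $\rho$ of the statement. Diagonalizing by $F$ as in Lemma 1.6 then identifies the eigenvalues of $SU^t$ with the entries of $\nu=F^*\rho$.

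The final step, and the one requiring the most care, is to read ``symmetric and positive definite'' off the eigenvalue vector $\nu$. Since $\rho$ is real, $\nu=F^*\rho$ satisfies $\nu_{-j}=\overline{\nu_j}$, so the eigenvalues of the real circulant $SU^t$ are \emph{not} real a priori: the matrix is symmetric exactly when $\rho$ is an even vector, equivalently when every $\nu_j$ is real. Thus imposing $\nu_j>0$ for all $j$ enforces at one stroke both the symmetry of $SU^t$ (the first-order, critical-point condition of \cite{bcs}) and the positivity of its spectrum (the second-order, local-maximum condition), giving $SU^t$ symmetric positive definite iff $\nu>0$, which is condition (2). The main obstacle is precisely this bookkeeping — checking that the single requirement $\nu>0$ correctly packages the reality/symmetry together with the positivity, and that the Fourier conventions linking $\gamma$, $q$, $\rho$ and $\nu$ are kept consistent with Lemma 1.6 throughout.
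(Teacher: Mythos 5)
Your proof is correct and follows essentially the same route as the paper: orthogonality is converted to condition (1) via the Fourier-diagonalization lemma, and the positivity condition is handled by observing that the product of sign matrix and transposed matrix is again circulant with symbol $\rho$, hence has spectrum given by $\nu=F^*\rho$. The only differences are cosmetic: the paper passes to $S^tU$ (equivalent to $SU^t>0$ by conjugation with $U$) so that the product is circulant in its stated convention $M_{ij}=\rho_{j-i}$, whereas you work with $SU^t$ directly, and your explicit bookkeeping that $\nu>0$ simultaneously enforces the symmetry of $SU^t$ (via $\nu_{-j}=\overline{\nu_j}$, so reality of $\nu$ means $\rho$ is even) and the positivity of the spectrum is a point the paper leaves implicit.
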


\begin{proof}
By Lemma 1.3 the orthogonality of $U$ is equivalent to the condition (1). Regarding now the condition $SU^t>0$, this is equivalent to $S^tU>0$. But:
$$(S^tH)_{ij}=\sum_kS_{ki}H_{kj}=\sum_k\varepsilon_{i-k}\gamma_{j-k}=\sum_r\varepsilon_r\gamma_{j-i+r}=\rho_{j-i}$$

Thus $S^tU$ is circulant, with $\rho/\sqrt{N}$ as first row. From Lemma 1.3 again we get $S^tU=FLF^*$ with $L=diag(\nu)$ and $\nu=F^*\rho$, so $S^tU>0$ iff $\nu>0$, and we are done. See \cite{bnz}.
\end{proof}

Now, let us investigate the circulant and symmetric orthogonal matrices:

\begin{lemma}
For a matrix $U\in M_N(\mathbb C)$, the following are equivalent:
\begin{enumerate}
\item $U$ is orthogonal, circulant and symmetric.

\item $U=FQF^*$ with $q\in\{\pm 1\}^N$ satisfying $q_i=q_{-i}$.
\end{enumerate}
\end{lemma}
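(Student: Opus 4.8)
The plan is to use Lemma 1.3 to dispose of the circulant condition first, and then translate the two remaining conditions---symmetry and orthogonality---into conditions on the diagonal vector $q$. By Lemma 1.3, $U$ is circulant if and only if $U=FQF^*$ for some diagonal $Q=\mathrm{diag}(q)$; so throughout I may assume this form and work entirely with $q\in\mathbb C^N$. The whole proof then reduces to showing that, for such $U$, symmetry is equivalent to $q_i=q_{-i}$ and that, in the presence of symmetry, orthogonality is equivalent to $q\in\{\pm 1\}^N$.

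For the symmetry condition I would record the elementary identities for the Fourier matrix: $F^t=F$, $F^*=\bar F=F^{-1}$, and $F^2=P$, where $P$ is the ``flip'' permutation $P_{ij}=\delta_{i+j\equiv 0}$ acting by $(Pv)_i=v_{-i}$. Using $F^t=F$ one computes $U^t=F^*QF$, so that $U=U^t$ becomes $FQF^*=F^*QF$; left- and right-multiplying by $F^*$ and using $(F^*)^2=P$ turns this into the commutation relation $QP=PQ$, which upon reading off entries is exactly $q_i=q_{-i}$. Alternatively one can compute the first row $\gamma$ of $U$ via the last assertion of Lemma 1.3 and observe that symmetry of $U$ is symmetry of $\gamma$, which Fourier-transforms to $q_i=q_{-i}$.

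For the orthogonality condition the efficient route is to combine it with the symmetry already extracted: since $U$ is orthogonal we have $UU^t=I$, and since $U=U^t$ this forces $U^2=I$. Writing $U^2=FQ^2F^*=I$ gives $Q^2=I$, i.e. $q_i^2=1$, hence $q_i\in\{\pm 1\}$. Together with $q_i=q_{-i}$ this is precisely condition (2). For the converse I would check that $q\in\{\pm 1\}^N$ with $q_i=q_{-i}$ makes $Q$ a real diagonal unitary, so that $U=FQF^*$ is unitary; realness of $U$ then follows from $\bar U=F^*QF=U^t$ together with the symmetry $U=U^t$ (which is again the relation $QP=PQ$), and real plus unitary gives orthogonal, while circulant and symmetric hold by construction.

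The only genuinely delicate point is keeping the symmetry and realness conditions properly separated and correctly combined: naively, unitarity of $Q$ alone yields only $q\in\mathbb T^N$, and it is the interaction of symmetry with orthogonality (through $U^2=I$, or equivalently through the fact that for real $Q$ the matrix $U$ is real iff it is symmetric) that pins $q$ down to $\{\pm 1\}^N$ rather than to the full unit circle. Getting this bookkeeping right---and being careful that ``orthogonal'' is meant in the real sense---is where the argument must be watched; the remaining computations are routine manipulations with $F$, $P$ and diagonal matrices.
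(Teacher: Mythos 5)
Your proposal is correct, and all the computations check out: $U^t=F^*QF$ follows from $F^t=F$, the commutation $QP=PQ$ with $P=F^2$ the flip does read off entrywise as $q_i=q_{-i}$, and the converse is handled completely (realness from $\bar U=U^t=U$, with $\bar U=F^*QF$ valid because $Q$ is real). The route differs from the paper's in how the three conditions are paired off. The paper first combines \emph{circulant and orthogonal}, obtaining $q\in\mathbb T^N$ with $\bar q_i=q_{-i}$ (the Fourier-dual form of realness of $\gamma=Fq$), and only then imposes symmetry, which via $(Fq)_i=(Fq)_{-i}$ and injectivity of $F$ gives $q_i=q_{-i}$; intersecting the two relations yields $q=\bar q\in\mathbb T^N$, hence $q\in\{\pm 1\}^N$. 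You instead pair \emph{circulant and symmetric} first, and then observe that orthogonality plus symmetry forces the involution $U^2=1$, hence $Q^2=1$ and $q_i=\pm 1$ in one stroke. Your version buys two things: it avoids the Fourier characterization of realness $\bar q_i=q_{-i}$ in the forward direction (that fact appears only in your converse, in the matrix form $\bar U=U^t$), and it makes conceptually transparent why adding symmetry collapses the torus $\mathbb T^N$ of circulant unitaries down to the lattice $\{\pm 1\}^N$ -- namely, a symmetric orthogonal matrix is an involution, so its Fourier spectrum consists of square roots of unity. The paper's decomposition, on the other hand, recycles the condition $q\in\mathbb T^N$ already established for Proposition 1.4, so it integrates more economically with the surrounding text; it also leaves the converse implicit in its chain of equivalences, whereas you spell it out, which is a small gain in rigor.
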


\begin{proof}
It follows from Lemma 1.3 that $U$ is orthogonal and symmetric iff $U=FQF^*$, with $q\in\mathbb T^N$ satisfying $\bar{q}_i=q_{-i}$. The symmetry condition reads $(Fq)_i=(Fq)_{-i}$ which translates into the following system of equations, with $i=0,\ldots,N-1$:
$$\sum_kw^{ik}(q_k-q_{-k})=0$$

This system admits the unique solution $q_k-q_{-k}=0$, and the result follows.
\end{proof}

As an example, the vector $q=(-1)^n(1,-1,1,\ldots,-1,1,1,-1,\ldots,1,-1)$, having length $N=2n+1$, produces the following $N\times N$ matrix, from \cite{bnz}:
$$L_N=\frac{1}{N}
\begin{pmatrix}
1&-\cos^{-1}\frac{\pi}{N}&\cos^{-1}\frac{2\pi}{N}&\ldots&\cos^{-1}\frac{(N-1)\pi}{N}\\
\cos^{-1}\frac{(N-1)\pi}{N}&1&-\cos^{-1}\frac{\pi}{N}&\ldots&-\cos^{-1}\frac{(N-2)\pi}{N}\\
\ldots&\ldots&\ldots&\ldots&\ldots\\
-\cos^{-1}\frac{\pi}{N}&\cos^{-1}\frac{2\pi}{N}&-\cos^{-1}\frac{3\pi}{N}&\ldots&1
\end{pmatrix}$$

Let us reformulate now the above result in a more convenient form, and gather as well some examples. Recall that the integer part of a real number $r$ is denoted $\lfloor r \rfloor$.

\begin{proposition}
There are $2^n$ circulant symmetric orthogonal $N\times N$ matrices, indexed via $U=FQF^*$ by sign vectors $q\in\{\pm 1\}^n$, where $n=\lfloor N/2 \rfloor + 1$. The examples include:
\begin{enumerate}
\item The identity matrix $1_N$, coming from $q=(1,1,\ldots,1)$.

\item The matrix $U_N=2J_N-1_N$, coming from $q=(1,-1,-1,\ldots,-1)$.

\item For $N$ even, the matrix $S_N=(^0_1{\ }^1_0)$, coming from $q=(1,-1,1,\ldots,-1,1,-1)$.

\item For $N$ odd, the above matrix $L_N$, coming from $q=(-1)^{\lfloor N/2 \rfloor}(1,-1,1,\ldots,-1,1)$.
\end{enumerate}
\end{proposition}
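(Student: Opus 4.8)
The plan is to combine the characterization of Lemma 1.5 with a simple orbit count, and then to identify the four listed matrices one by one by evaluating $U=FQF^*$ directly. By Lemma 1.5 the circulant symmetric orthogonal $N\times N$ matrices are exactly the $U=FQF^*$ with $q\in\{\pm1\}^N$ subject to $q_i=q_{-i}$, so the enumeration reduces to counting sign vectors invariant under the involution $i\mapsto -i$ on $\{0,1,\dots,N-1\}$.

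First I would count the orbits of this involution. The fixed points are $i=0$ (always), together with $i=N/2$ (only when $N$ is even), and the remaining indices pair off into two-element orbits $\{i,N-i\}$. Hence the number of orbits is $1+(N-1)/2=(N+1)/2$ for $N$ odd and $2+(N-2)/2=N/2+1$ for $N$ even, which in both cases equals $n=\lfloor N/2\rfloor+1$. A symmetric sign vector is precisely a free choice of sign on each orbit, so there are $2^n$ of them, and therefore $2^n$ matrices $U$; this proves the counting assertion.

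For the examples I would use the entry formula $U_{ij}=\frac1N\sum_k q_k w^{k(i-j)}$ coming from $U=FQF^*$. Case (1) is immediate: $q\equiv1$ gives $Q=1_N$ and $U=FF^*=1_N$. For (2), writing $Q=2E_{00}-1_N$ with $E_{00}$ the $(0,0)$ matrix unit and using $FE_{00}F^*=J_N$ (the rank-one projection with all entries $1/N$) yields $U=2J_N-1_N$. For (3), with $N$ even and $q_k=(-1)^k=w^{kN/2}$, orthogonality of characters collapses the sum to $\delta_{j,\,i+N/2}$, i.e. the shift-by-$N/2$ permutation, which in block form is $S_N$. For (4) I would feed the alternating vector into the same sum and show it produces the secant entries $\frac{(-1)^{i-j}}{N}\cos^{-1}\!\big(\tfrac{\pi(i-j)}{N}\big)$, recovering $L_N$.

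The one genuinely delicate point is example (4). For $N$ odd the symmetric extension of $(-1)^{\lfloor N/2\rfloor}(1,-1,1,\dots)$ is \emph{not} globally alternating: reflecting across the middle index inserts an extra sign flip, so one must sum the honest symmetric vector rather than the naive geometric series (which would contribute a spurious imaginary part). Splitting the sum into its two halves makes it manifestly real, and the Dirichlet-kernel identity $1+2\sum_{k\ge1}(-1)^k\cos(k\beta)=\sin\!\big((n_0+\tfrac12)(\beta+\pi)\big)/\sin\!\big(\tfrac{\beta+\pi}{2}\big)$, with $\beta=2\pi(i-j)/N$ and $n_0=\lfloor N/2\rfloor$, evaluates after tracking the prefactor $(-1)^{\lfloor N/2\rfloor}$ to exactly the secant values above. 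Since $L_N$ was already introduced above with this very vector (following \cite{bnz}), I would lean on that computation rather than redo all the sign bookkeeping; the remaining cases are routine.
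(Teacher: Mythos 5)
Your proposal is correct and follows essentially the same route as the paper: Lemma 1.5 plus the observation that the symmetry condition $q_i=q_{-i}$ leaves free choices exactly on the orbits of $i\mapsto -i$ (with fixed points $i=0$, and $i=N/2$ for $N$ even), giving $2^n$ matrices, and then direct verification of the four examples. The only difference is one of detail: you spell out the entry computations for (1)--(3) and sketch the Dirichlet-kernel evaluation for (4), including the genuinely subtle point that for $N$ odd the symmetric extension is not globally alternating, whereas the paper simply calls (1)--(3) clear and defers (4) to the elementary computation in \cite{bnz} --- which is the same computation you ultimately lean on.
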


\begin{proof}
The first assertion follows from Lemma 1.5, and from the fact that the condition $q_i=q_{-i}$ is redundant for $i=0$ for all $N$, and for $i=N/2$ when $N$ is even. The vector $q$ generating the orthogonal matrix is then given by $q=(q_0,q_1,q_2,\ldots,q_2,q_1)$.

Regarding now the assertions (1-4), we just have to prove here that the $q$-vectors in the statement produce indeed the matrices in the statement. But this is clear for (1-3), and (4) follows as well, via an elementary computation performed in \cite{bnz}.
\end{proof}

\begin{theorem}
The number of orthogonal circulant symmetric matrices (OCS), orthogonal circulant symmetric matrices with nonzero entries (OCSN) and circulant symmetric almost Hadamard matrices (AHM) is as follows:
\end{theorem}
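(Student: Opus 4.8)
The plan is to establish the three columns of the table separately, each by specializing a criterion already obtained above.

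For the OCS count I would invoke Proposition 1.7 directly: the orthogonal circulant symmetric matrices are in bijection with sign vectors $q\in\{\pm1\}^n$, where $n=\lfloor N/2\rfloor+1$, so their number is exactly $2^n$. This column is therefore given by a closed formula, and needs no further work beyond tabulation. For the OCSN count I would begin from the explicit entries of $U=FQF^*$. By Lemma 1.3 these are $U_{ij}=\frac1N\sum_kw^{(i-j)k}q_k$, and since $q_k=q_{-k}$ the matrix is real, determined by its first row $\gamma_m=\frac1N\sum_k\cos(2\pi mk/N)\,q_k$. A matrix belongs to OCSN exactly when all of $\gamma_0,\dots,\gamma_{N-1}$ are nonzero, so the OCSN count is the number of the $2^n$ admissible $q$ for which none of these cosine sums vanishes. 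For each fixed $N$ this is a finite verification, which I would carry out by direct enumeration.

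For the AHM count I would restrict to the OCSN matrices and apply the criterion of Proposition 1.4 to $H=\sqrt N U$. Condition (1) there (orthogonality) holds automatically by Lemma 1.5, and by the Proposition stated in the Introduction every such $U$ is already a critical point of the $1$-norm; so the only thing left to test is the second-order condition (2), namely that with $\varepsilon=\mathrm{sgn}(\gamma)$ and $\rho_i=\sum_r\varepsilon_r\gamma_{i+r}$ the vector $\nu=F^*\rho$ is strictly positive. Once more this is a finite computation for each $N$: form $\rho$, diagonalize it by Fourier transform as in Proposition 1.4, and read off the signs of the entries of $\nu$.

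The hard part is that I do not expect a uniform closed formula for the OCSN and AHM columns. Whether a given cosine sum $\gamma_m$ vanishes, and whether $\nu$ is strictly positive, depend on the arithmetic of $N$ — in particular on its divisors, since coincidences and cancellations among the values $\cos(2\pi mk/N)$ are controlled by the factorization of $N$. This is precisely why the statement is phrased as holding ``up to $N=30$'' rather than by a general formula: the table records the output of the above finite procedures for each $N$ in range. A final remark I would add is that the symmetries of the potential $\Phi$ — the global sign change $q\mapsto -q$ together with the cyclic shifts — organize the solutions into orbits, which both shortens the enumeration and clarifies the multiplicities appearing in the AHM column.
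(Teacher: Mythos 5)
Your proposal is correct and is essentially the paper's own proof: the paper likewise counts by computer enumeration over the sign vectors $q\in\{\pm 1\}^n$, $n=\lfloor N/2\rfloor+1$, from Proposition 1.6, testing nonvanishing of the entries for OCSN and the criterion $\nu=F^*\rho>0$ of Proposition 1.4 for AHM. Two inessential slips: the parametrization you cite is Proposition 1.6 (not 1.7), and your closing remark about cyclic shifts is off, since shifting $q$ (or $\gamma$) does not preserve the symmetry condition $q_i=q_{-i}$, so only the sign flip $q\mapsto -q$ acts on this family --- but neither affects the argument.
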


\begin{center}
\begin{tabular}[t]{|l|l|l|l|l|l|l|l|l|l|l|l|l|l|l|l||||}
\hline $N$&2&3&4&5&6&7&8&9&10&11&12&13&14&15\\
\hline OCS&4&4&8&8&16&16&32&32&64&64&128&128&256&256\\
\hline OCSN&0&2&4&6&8&14&16&22&40&62&44&126&176&186\\
\hline AHM&0&2&4&6&8&14&8&22&24&42&36&108&104&68\\
\hline
\end{tabular}
\end{center}
\medskip

\begin{center}
\begin{tabular}[t]{|l|l|l|l|l|l|l|l|l|l|l|l|l|l|l|l||||}
\hline $N$&16&17&18&19&20&21&22&23\\
\hline OCSM&512&512&1024&1024&2048&2048&4096&4096\\
\hline OCSN&296&510&536&1022&1220&1642&3088&4094\\
\hline AHM&136&302&152&404&404&418&728&1410\\
\hline
\end{tabular}
\end{center}
\medskip

\begin{center}
\begin{tabular}[t]{|l|l|l|l|l|l|l|l|l|l|l|l|l|l|l|l||||}
\hline $N$&24&25&26&27&28&29&30\\
\hline OCS&8192&8192&16384&16384&32768&32768&65536\\
\hline OCSN&4000&7734&12688&13586&22324&32766&39080\\
\hline AHM&856&1780&2504&3098&4140&6740&5608\\
\hline
\end{tabular}
\end{center}

\medskip

\begin{proof}
This follows from a computer implementation\footnote{Source code available at \href{http://www.irsamc.ups-tlse.fr/inechita/code/ocsn-ahm.zip}{http://www.irsamc.ups-tlse.fr/inechita/code/ocsn-ahm.zip}} of the algorithm in Proposition 1.4, by using as input the vector $q\in\{\pm 1\}^n$, with $n=\lfloor N/2 \rfloor + 1$, from Proposition 1.6.
\end{proof}

Observe the arithmetic dependence of the above numbers with $N$. However, this dependence is not exact, so in order to formulate an exact conjecture here, we would probably have to take into account certain algebraic geometric multiplicities, as in Haagerup's paper \cite{ha2}. The only observation we can make at this point is that, for prime $N$, there are only two OCS matrices with zero entries, $\pm 1_N$. We intend to come back to these questions in some future work.

\section{Critical points, color decomposition}

In this section we characterize the critical points of the $p$-norm on $O(N)$. Our starting point, which motivates our study, is the following simple observation from \cite{bcs}:

\begin{proposition}
Let $U\in O(N)$, and let $p\in [1,\infty]-\{2\}$.
\begin{enumerate}
\item If $p<2$ then $||U||_p\leq N^{2/p-1/2}$, with equality iff $H=\sqrt{N}U$ is Hadamard.

\item If $p>2$ then $||U||_p\geq N^{2/p-1/2}$, with equality iff $H=\sqrt{N}U$ is Hadamard.
\end{enumerate}
\end{proposition}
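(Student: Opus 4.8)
The plan is to reduce both statements to a single convexity argument, exploiting the fact that orthogonality of $U$ forces $\sum_{ij}U_{ij}^2=N$, since each of the $N$ rows is a unit vector. Writing $b_{ij}=U_{ij}^2\geq 0$, we thus have $N^2$ nonnegative numbers summing to $N$, and the quantity to control is $||U||_p^p=\sum_{ij}b_{ij}^{p/2}$. The whole proof is then a matter of applying the right inequality to the $b_{ij}$ and tracking when equality occurs.

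For part (1), with $p<2$, I would apply H\"older's inequality to the $N^2$ products $|U_{ij}|^p\cdot 1$ with the conjugate exponents $2/p$ and $2/(2-p)$, obtaining $\sum_{ij}|U_{ij}|^p\leq(\sum_{ij}U_{ij}^2)^{p/2}(N^2)^{(2-p)/2}=N^{p/2}\cdot N^{2-p}=N^{2-p/2}$; raising to the power $1/p$ gives the bound $N^{2/p-1/2}$. (At $p=1$ this recovers the Cauchy-Schwarz estimate $N\sqrt N$ from the introduction.) For part (2), with $2<p<\infty$, the exponent $p/2$ exceeds $1$, so $x\mapsto x^{p/2}$ is convex, and Jensen's inequality applied to the uniform average of the $b_{ij}$ reverses the direction: $\frac{1}{N^2}\sum_{ij}b_{ij}^{p/2}\geq\left(\frac{1}{N^2}\sum_{ij}b_{ij}\right)^{p/2}=N^{-p/2}$, whence $||U||_p^p\geq N^{2-p/2}$ and $||U||_p\geq N^{2/p-1/2}$. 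The endpoint $p=\infty$ I would treat separately but in the same spirit: since the $N^2$ values $U_{ij}^2$ average to $1/N$, their maximum is at least $1/N$, giving $||U||_\infty=\max_{ij}|U_{ij}|\geq N^{-1/2}=\lim_{p\to\infty}N^{2/p-1/2}$.

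Finally I would settle the equality cases, which is precisely where the two inequalities coincide: equality in H\"older (resp. in Jensen, resp. in the max bound) holds exactly when all the $b_{ij}=U_{ij}^2$ are equal. Given the constraint $\sum_{ij}b_{ij}=N$, this forces $U_{ij}^2=1/N$ for every $i,j$, i.e. $H=\sqrt N\,U$ has all entries in $\{\pm1\}$. Conversely such an $H$ automatically satisfies $HH^t=N\,1_N$, the rows being orthogonal of norm $\sqrt N$, so $H$ is exactly a real Hadamard matrix, and any Hadamard matrix clearly attains the bound. I do not anticipate a genuine obstacle here: the only points needing minimal care are the bookkeeping of the conjugate exponents and the separate, non-analytic handling of the endpoint $p=\infty$, where H\"older/Jensen must be replaced by the elementary observation that the maximum of a finite family exceeds its average.
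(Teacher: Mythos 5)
Your proof is correct and follows essentially the same route as the paper: both reduce to comparing the $p$-norm of the $N^2$ entries with the $2$-norm via the orthogonality constraint $\sum_{ij}U_{ij}^2=N$, with equality forcing $|U_{ij}|=1/\sqrt{N}$, hence $H=\sqrt{N}\,U\in M_N(\pm 1)$ Hadamard. Your substitution of Jensen for the reverse H\"older comparison at $p>2$, and your separate elementary treatment of the endpoint $p=\infty$ (which the paper leaves implicit), are only cosmetic refinements of the same argument.
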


\begin{proof}
In the case $p<2$, the H\"older inequality gives:
$$||U||_p\leq N^{2/p-1}||U||_2=N^{2/p-1/2}$$

Also, in the case $p>2$, the H\"older inequality gives:
$$||U||_p\geq N^{2/p-1}||U||_2=N^{2/p-1/2}$$

In both cases the equality holds when all the numbers $|U_{ij}|$ are proportional, and we conclude that we have equality if and only if $|U_{ij}|=1/\sqrt{N}$, as stated. 
\end{proof}

Observe that at $p=1,4,\infty$ we obtain $||U||_1\leq N\sqrt{N}$, $||U||_4\geq 1$, $||U||_\infty\geq 1/\sqrt{N}$, in all cases with equality if and only if the rescaled matrix $H=\sqrt{N}U$ is Hadamard.

\begin{definition}
A matrix $H\in M_N(\mathbb R)$ such that $U=H/\sqrt{N}$ is orthogonal is called:
\begin{enumerate}
\item $p$-almost Hadamard $(p<2)$, if $U$ locally maximizes the $p$-norm on $O(N)$.

\item $p$-almost Hadamard $(p>2)$, if $U$ locally minimizes the $p$-norm on $O(N)$.
\end{enumerate}
\end{definition}

As a first remark, given an exponent $p\neq 2$ and a number $N\in\{2\}\cup 4\mathbb N$ where the Hadamard Conjecture holds, the Hadamard matrices of order $N$ are precisely the $p$-almost Hadamard matrices $H\in M_N(\mathbb R)$ which are ``optimal'', in the sense that the rescaled matrix $U=H/\sqrt{N}$ is a global maximum/minimum of the $p$-norm on $O(N)$.

Let us try now to understand the critical points of the various $p$-norms on $O(N)$. Consider the set $O(N)^*\subset O(N)$ of orthogonal matrices having nonzero entries. Given a function $\varphi\in C^1(0,\infty)$, the associated function $F(U)=\sum_{ij}\varphi(|U_{ij}|)$ is differentiable around each $U\in O(N)^*$, and the critical points of $F$ can be found as follows:

\begin{lemma}
For $U\in O(N)^*$ and $\varphi\in C^1(0,\infty)$, the following are equivalent:
\begin{enumerate}
\item $U$ is a critical point of $F(U)=\sum_{ij}\varphi(|U_{ij}|)$. 

\item $WU^t$ is symmetric, where $W_{ij}={\rm sgn}(U_{ij})\varphi'(|U_{ij}|)$.
\end{enumerate}
\end{lemma}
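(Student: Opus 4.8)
The plan is to reduce criticality to a statement about directional derivatives, and then to translate the vanishing of all such derivatives into a linear-algebra symmetry condition. Since $U\in O(N)^*$ has all entries nonzero, the functions $V\mapsto|V_{ij}|$ are differentiable in a neighbourhood of $U$, so $F$ is $C^1$ there, and $U$ is a critical point precisely when $\frac{d}{dt}F(U_t)|_{t=0}=0$ for every smooth curve $t\mapsto U_t\in O(N)$ with $U_0=U$. The tangent space to $O(N)$ at $U$ is $\{XU:X+X^t=0\}$, and every tangent direction is realised by the explicit curve $U_t=e^{tX}U$, which lies in $O(N)$ because $e^{tX}$ is orthogonal when $X$ is antisymmetric. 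Thus it suffices to differentiate $F$ along these curves, as $X$ ranges over all antisymmetric matrices.

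First I would compute the derivative. We have $\frac{d}{dt}(U_t)_{ij}|_{t=0}=(XU)_{ij}$, and since $U_{ij}\neq 0$ the entry $(U_t)_{ij}$ keeps a constant sign for small $t$, so $\frac{d}{dt}|(U_t)_{ij}|\,|_{t=0}={\rm sgn}(U_{ij})\,(XU)_{ij}$. The chain rule then gives
$$\frac{d}{dt}F(U_t)\Big|_{t=0}=\sum_{ij}\varphi'(|U_{ij}|)\,{\rm sgn}(U_{ij})\,(XU)_{ij}=\sum_{ij}W_{ij}(XU)_{ij},$$
with $W_{ij}={\rm sgn}(U_{ij})\varphi'(|U_{ij}|)$. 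Recognising the right-hand side as a Frobenius pairing and using the cyclicity of the trace, this equals $\mathrm{Tr}(W^tXU)=\mathrm{Tr}(UW^t\,X)$.

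Finally I would invoke the duality between symmetric and antisymmetric matrices: for a fixed matrix $M$ one has $\mathrm{Tr}(MX)=0$ for all antisymmetric $X$ if and only if $M$ is symmetric, the symmetric and antisymmetric subspaces being orthogonal complements for the trace form. Applying this with $M=UW^t$, criticality of $U$ is equivalent to $UW^t$ being symmetric; since $(UW^t)^t=WU^t$, this is the same as $WU^t=UW^t$, i.e. $WU^t$ symmetric, which is assertion (2).

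The computation is essentially routine, and in fact reduces to the $1$-norm case of \cite{bcs} upon taking $\varphi(x)=x$, where $W$ becomes the sign matrix $S$. The only points requiring genuine care, which I would flag as the main (mild) obstacle, are the two structural inputs: verifying that $U_t=e^{tX}U$ with $X$ antisymmetric sweeps out the entire tangent space to $O(N)$ at $U$, so that testing these curves really detects criticality, and justifying the differentiation of $|(U_t)_{ij}|$, which is exactly where the hypothesis $U\in O(N)^*$ is used.
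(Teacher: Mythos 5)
Your proof is correct, but it takes a dual route to the paper's. The paper argues via Lagrange multipliers: it views $O(N)$ as the zero set of the polynomials $A_{ij}=\sum_k U_{ik}U_{jk}-\delta_{ij}$, notes that criticality means $dF\in\mathrm{span}(dA_{ij})$, and --- exploiting $A_{ij}=A_{ji}$ --- that this amounts to the existence of a \emph{symmetric} matrix $M$ with $dF=\sum_{ij}M_{ij}\,dA_{ij}$; computing both sides yields $W=2MU$, whence $M=WU^t/2$ must be symmetric. You instead work intrinsically on the manifold: you parametrize the tangent space at $U$ by the curves $e^{tX}U$ with $X$ antisymmetric, compute the directional derivative as $\mathrm{Tr}(UW^tX)$, and invoke the orthogonality of the symmetric and antisymmetric subspaces under the trace pairing to conclude that $UW^t$ is symmetric --- which is indeed equivalent to assertion (2), since both symmetry conditions reduce to the single equation $UW^t=WU^t$. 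The two arguments share the essential computation $dF=\sum_{ij}W_{ij}\,dU_{ij}$, which is exactly where $U\in O(N)^*$ and $\varphi\in C^1$ enter (your remark that each $(U_t)_{ij}$ keeps a constant sign for small $t$ is the right justification), and both use orthogonality of $U$ only at the final step. What your version buys: it sidesteps the implicit regularity fact that the $dA_{ij}$ span the full conormal space of $O(N)$, and it anticipates the technique the paper itself deploys in Lemma 4.2, where $f(t)=||Ue^{tA}||_p^p$ is differentiated and $f'(0)$ is precisely your trace pairing specialized to $\varphi(x)=x^p$, giving the condition $\sum_{r>0}r^{p-1}\mathrm{Tr}(R_rA^t)=0$. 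What the paper's version buys: it stays closest to the original argument of \cite{bcs} for $\varphi(x)=x$, of which this lemma is the direct generalization, and it needs no identification of the tangent space via the exponential map.
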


\begin{proof}
We follow the proof in \cite{bcs}, where this result was established for $\varphi(x)=x$. We know that the group $O(N)$ consists of the zeroes of the following polynomials:
$$A_{ij}=\sum_kU_{ik}U_{jk}-\delta_{ij}$$

Also, $U$ is a critical point of $F$ iff $dF\in span(dA_{ij})$. Now since $A_{ij}=A_{ji}$, this is the same as asking for a symmetric matrix $M$ such that $dF=\sum_{ij}M_{ij}dA_{ij}$. But:
$$\sum_{ij}M_{ij}dA_{ij}
=\sum_{ijk}M_{ij}(U_{ik}dU_{jk}+U_{jk}dU_{ik})
=2\sum_{lk}(MU)_{lk}dU_{lk}$$

On the other hand, with $S_{ij}={\rm sgn}(U_{ij})$, we get:
$$dF=\sum_{ij}d\left(\varphi(S_{ij}U_{ij})\right)=\sum_{lk}S_{ij}\varphi'(S_{ij}U_{ij})dU_{ij}=\sum_{ij}W_{ij}dU_{ij}$$

We conclude that $U$ is a critical point of $F$ iff there exists a symmetric matrix $M$ such that $W=2MU$. Now by using the assumption $U\in O(N)$, this condition simply tells us that the matrix $M=WU^t/2$ must be symmetric, so we are done.
\end{proof}

In order now to investigate the symmetry property of the matrix $WU^t$ appearing in the above statement, we use the following notion:

\begin{definition}
The color decomposition of $U\in O(N)^*$ is $U=\sum_{r>o}rU^{(r)}$, where:
$$U^{(r)}_{ij}=
\begin{cases}
{\rm sgn}(U_{ij})&{\rm if}\ |U_{ij}|=r\\
0&{\rm if}\ |U_{ij}|\neq r
\end{cases}$$
The matrices $U^{(r)}\in M_N(-1,0,1)$ will be called ``color components'' of $U$. 
\end{definition}

If we let $S_{ij}={\rm sgn}(U_{ij})$, then for any $\psi:(0,\infty)\to\mathbb R$ we have the following useful formula, that we will use many times in what follows:
$$S_{ij}\psi(|U_{ij}|)=\sum_{r>0}\psi(r)U_{ij}^{(r)}$$

Let us investigate now the critical points of all $p$-norms on $O(N)$:

\begin{theorem}
For $U\in O(N)^*$, the following are equivalent:
\begin{enumerate}
\item $U$ is a critical point of the $p$-norm on $O(N)$, for any $p\in[1,\infty]$.

\item $U$ is a critical point of $F(U)=\sum_{ij}\varphi(|U_{ij}|)$, for any $\varphi\in C^1(0,\infty)$. 

\item $WU^t$ is symmetric for any $\psi:(0,\infty)\to\mathbb R$, where $W_{ij}={\rm sgn}(U_{ij})\psi(|U_{ij}|)$.

\item $U^{(r)}U^t$ is symmetric for any $r>0$, where $U^{(r)}$ are the color components of $U$.
\end{enumerate}
\end{theorem}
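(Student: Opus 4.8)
The plan is to run every one of the four conditions through the color decomposition of Definition 2.4, so that each becomes a single linear relation among finitely many fixed matrices. Let $r_1,\dots,r_k$ be the distinct values taken by the entries $|U_{ij}|$ (finitely many, as $U$ has only $N^2$ entries), so that $U=\sum_{m=1}^k r_m U^{(r_m)}$. For $W_{ij}=\mathrm{sgn}(U_{ij})\psi(|U_{ij}|)$, the useful formula stated after Definition 2.4 gives $W=\sum_m\psi(r_m)U^{(r_m)}$, and therefore
$$WU^t=\sum_{m=1}^k\psi(r_m)\,U^{(r_m)}U^t.$$
Writing $A_m=U^{(r_m)}U^t-\big(U^{(r_m)}U^t\big)^t$ for the antisymmetric parts, the symmetry of $WU^t$ is precisely the relation $\sum_m\psi(r_m)A_m=0$. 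Each of (1)--(4) thus asserts $\sum_m c_m A_m=0$ for $(c_m)$ ranging over a prescribed family of coefficient vectors, and the theorem amounts to the statement that all four families span $\mathbb R^k$.

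Concretely I would close the cycle (1)$\Rightarrow$(4)$\Rightarrow$(3)$\Rightarrow$(2)$\Rightarrow$(1). For (1)$\Rightarrow$(4), apply Lemma 2.3 to $\varphi_p(x)=x^p$: criticality of the $p$-norm becomes the relation with coefficient vector $(r_m^{p-1})_m$, and letting $p=1,\dots,k$ produces a Vandermonde system in the distinct nodes $r_1,\dots,r_k$, whose invertibility forces every $A_m=0$, which is exactly (4). For (4)$\Rightarrow$(3), note that $WU^t=\sum_m\psi(r_m)U^{(r_m)}U^t$ is then a linear combination of symmetric matrices, hence symmetric for every $\psi$. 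For (3)$\Rightarrow$(2), Lemma 2.3 identifies (2) with the restriction of (3) to the functions $\psi=\varphi'$ arising from $\varphi\in C^1(0,\infty)$, so it is an immediate special case. Finally (2)$\Rightarrow$(1) follows by specialising $\varphi=\varphi_p$ again: for $p<\infty$ the critical points of $\|\cdot\|_p$ coincide with those of $\|\cdot\|_p^p=\sum_{ij}\varphi_p(|U_{ij}|)$, while $p=2$ imposes no condition since $\|\cdot\|_2$ is constant on $O(N)$.

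The one step deserving genuine care is the Vandermonde passage in (1)$\Rightarrow$(4): it is what upgrades the information coming from the very thin family of power functions to the symmetry of each individual $U^{(r)}U^t$, and it succeeds only because the set of colors is finite, so that finitely many exponents already exhaust $\mathbb R^k$. The sole remaining delicate point is the exponent $p=\infty$, where $\|\cdot\|_\infty=\max_{ij}|U_{ij}|$ is not differentiable and Lemma 2.3 does not directly apply; since integer exponents already suffice for the Vandermonde argument this case can be either omitted from the equivalence or recovered through a limiting argument, with no effect on the conclusion.
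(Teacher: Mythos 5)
Your proof is correct, and it rests on the same two pillars as the paper's --- Lemma 2.3 and the identity $WU^t=\sum_{r>0}\psi(r)U^{(r)}U^t$ --- but it routes the key implication through a genuinely different argument. The paper proves the equivalences pairwise: (1)$\iff$(2), with the forward direction justified only by the claim that the powers $x^p$ span a dense subalgebra of $C^1(0,\infty)$; (2)$\iff$(3), because the symmetry condition of Lemma 2.3 is purely algebraic in $\psi=\varphi'$; and (3)$\iff$(4), by letting $\psi$ run over functions supported at a single color. Your Vandermonde passage for (1)$\Rightarrow$(4) replaces the paper's density claim --- the one step in the published proof stated without detail, and which would need some care to make precise (density in which topology, and why criticality passes to the limit) --- by a clean finite-dimensional fact: since $U$ has only $k$ distinct colors $r_1,\dots,r_k$, the exponents $p=1,\dots,k$ already yield the system $\sum_m r_m^{p-1}A_m=0$ with an invertible Vandermonde matrix in distinct positive nodes, forcing each antisymmetric part $A_m$ to vanish; the presence of $p=2$ among these exponents is harmless, since the corresponding relation $\sum_m r_m A_m=0$ holds automatically from $UU^t=1_N$. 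What your route buys is full rigor and elementarity at the cost of nothing; what the paper's route buys is a symmetric presentation of the four conditions as successive reformulations. You are also more careful than the paper on two points it passes over in silence: the identification of critical points of $\|\cdot\|_p$ with those of $\|\cdot\|_p^p$ (needed to invoke Lemma 2.3 at all), and the exponent $p=\infty$, where the sup-norm is not differentiable and ``critical point'' is not even defined --- your remark that the integer exponents already suffice, so that $p=\infty$ may be dropped from (1) without weakening the statement, is the honest resolution of an issue the paper's proof simply ignores.
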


\begin{proof}
The result basically follows from Lemma 2.3:

(1)$\iff$(2) In one sense this is trivial, because it suffices to choose the continuously differentiable functions $\varphi(x)=x^p$. In the other sense, this follows from the fact that the functions $\varphi(x)=x^p$ span a dense subalgebra of $C^1(0,\infty)$.

(2)$\iff$(3) This follows from Lemma 2.3, because the condition found there is purely algebraic, and hence doesn't depend on the fact that $\psi=\varphi'$ is continuous.

(3)$\iff$(4) We have the following formula:
$$(WU^t)_{ij}=\sum_k{\rm sgn}(U_{ik})\psi(|U_{ik}|)U_{jk}=\sum_{r>0}\psi(r)\sum_{k,|U_{ik}|=r}{\rm sgn}(U_{ik})U_{jk}$$

In terms of the color components of $U$, this formula becomes:
$$(WU^t)_{ij}=\sum_{r>0}\psi(r)\sum_kU^{(r)}_{ik}U_{jk}=\sum_{r>0}\psi(r)(U^{(r)}U^t)_{ij}$$

Thus the matrix appearing in (2) is simply given by:
$$WU^t=\sum_{r>0}\psi(r)U^{(r)}U^t$$

Now since $\psi:(0,\infty)\to\mathbb R$ can be here any function, the result follows.
\end{proof}

As a first consequence, we have:

\begin{corollary}
Let $U=U(x,y)$ be orthogonal, coming from an $(a,b,c)$ pattern. Then $U$ is a critical point of all the $p$-norms on $O(N)$.
\end{corollary}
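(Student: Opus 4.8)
The plan is to deduce the result directly from the equivalence (1)$\iff$(4) in Theorem 2.5. Since $U=U(x,y)$ is orthogonal with entries in $\{x,y\}$, it lies in $O(N)^*$ (both values being nonzero), so it is a critical point of every $p$-norm as soon as each color product $U^{(r)}U^t$ is symmetric. Thus the whole task reduces to checking this symmetry condition color by color, a purely combinatorial verification once the color components are identified.

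First I would describe the color decomposition of $U$. Because every entry equals either $x$ or $y$, there are at most two colors, $r=|x|$ and $r=|y|$. Writing $P,P'\in M_N(0,1)$ for the indicator matrices of the positions carrying $x$ and $y$ respectively, the color components are $U^{(|x|)}={\rm sgn}(x)\,P$ and $U^{(|y|)}={\rm sgn}(y)\,P'$ when $|x|\neq|y|$, while $U^{(r)}=S$ is their signed sum when $|x|=|y|$. In either case it suffices to show that both $PU^t$ and $P'U^t$ are symmetric.

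Next I would compute the off-diagonal entries using the pattern data. For $i\neq j$, the entry $(PU^t)_{ij}=\sum_k P_{ik}U_{jk}$ sums row $j$ over the $a+b$ columns in which row $i$ carries $x$; by the definition of an $(a,b,c)$ pattern, exactly $a$ of these columns carry $x$ in row $j$ and exactly $b$ carry $y$, giving $(PU^t)_{ij}=ax+by$. The same count applied to $(PU^t)_{ji}$ yields $ax+by$ again, since the two mixed blocks of the pattern both have size $b$; hence $PU^t$ is symmetric. An identical computation gives $(P'U^t)_{ij}=bx+cy$ for $i\neq j$, symmetric for the same reason, while the diagonal entries are automatically fine. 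Applying Theorem 2.5 then closes the argument.

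The heart of the matter, and the only place where anything beyond bookkeeping happens, is exactly this equality of the two mixed blocks: the defining combinatorics of an $(a,b,c)$ pattern forces the ``row $i$ has $x$, row $j$ has $y$'' count to match the ``row $i$ has $y$, row $j$ has $x$'' count, both being $b$. This is precisely what upgrades the mere $1$-norm criticality of Proposition 1.1, where only the total sign matrix $SU^t$ had to be symmetric, to criticality of every $p$-norm, where each color component must separately pair symmetrically with $U$. I expect no real obstacle beyond stating this observation cleanly.
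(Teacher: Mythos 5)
Your proposal is correct and follows essentially the same route as the paper: both reduce the statement to the criterion of Theorem 2.5, namely the symmetry of each color product $U^{(r)}U^t$, and verify it for the two components supported on the $x$-positions and the $y$-positions. The only difference is in the bookkeeping: the paper checks symmetry via the matrix identity $U_x+U_y=NJ_N$ together with the automatic symmetry of $U_xU_x^t$, whereas you count entries directly, obtaining the constant off-diagonal values $ax+by$ and $bx+cy$ from the two size-$b$ mixed blocks of the pattern --- an equally valid verification (and your explicit handling of the degenerate one-color case $|x|=|y|$ is a detail the paper leaves implicit).
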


\begin{proof}
As explained in \cite{bnz} the fact that $U$ is orthogonal shows that $x,y$ have opposite signs, and we will make the same normalization as there, namely $x<0$, $y>0$. 

Consider now the matrices $U_x,U_y\in M_N(0,1)$ describing the positions where our variables $x,y$ sit inside $U$. Then we have the following formulae:
$$U^{(x)}=-U_x,\quad U^{(y)}=U_y,\quad U=xU_x+yU_y,\quad U_x+U_y=NJ_N$$

By using these formulae we obtain that $U^{(x)}U^t$ is indeed self-adjoint:
\begin{eqnarray*}
U^{(x)}U^t
&=&-U_x(xU_x^t+yU_y^t)\\
&=&-U_x(xU_x^t+y(NJ_N-U_x^t))\\
&=&(y-x)U_xU_x^t-yNJ_NU_x\\
&=&(y-x)U_xU_x^t-y(a+b)NJ_N
\end{eqnarray*}

A similar computation shows that $U^{(y)}U^t$ is self-adjoint as well, and we are done.
\end{proof}

We have as well the following consequence:

\begin{corollary}
Any circulant and symmetric matrix $U\in O(N)$ having nonzero entries is a critical point of all $p$-norms on $O(N)$.
\end{corollary}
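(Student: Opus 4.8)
The goal is to show that any circulant and symmetric orthogonal matrix $U\in O(N)^*$ is a critical point of all $p$-norms. By Theorem 2.5, it suffices to verify condition (4): that $U^{(r)}U^t$ is symmetric for every color $r>0$. So the whole proof reduces to understanding the color components $U^{(r)}$ of a circulant symmetric matrix, and checking the symmetry of the products $U^{(r)}U^t$.

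First I would exploit the two structural hypotheses directly. Writing $U_{ij}=\gamma_{j-i}$ for the circulant structure, symmetry of $U$ forces $\gamma_{k}=\gamma_{-k}$ (equivalently $U_{ij}=U_{ji}$ means $\gamma_{j-i}=\gamma_{i-j}$). The key observation is that each color component inherits exactly the same structure: if I set $r_{k}=\gamma_{k}$ when $|\gamma_{k}|=r$ and $0$ otherwise, then $U^{(r)}_{ij}={\rm sgn}(\gamma_{j-i})[\,|\gamma_{j-i}|=r\,]$ depends only on $j-i$, so $U^{(r)}$ is itself circulant; and since $\gamma_{k}=\gamma_{-k}$ implies $|\gamma_{k}|=|\gamma_{-k}|$ and ${\rm sgn}(\gamma_{k})={\rm sgn}(\gamma_{-k})$, each $U^{(r)}$ is also symmetric. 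Thus every color component is a real circulant symmetric matrix.

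Next I would use the fact that all circulant matrices of a fixed size commute, because they are simultaneously diagonalized by the Fourier matrix $F$ (this is Lemma 1.3). Both $U^{(r)}$ and $U^t=U$ are circulant, hence $U^{(r)}U^t=U^{(r)}U$ is again circulant, and in particular it equals $F D_r F^*$ for a diagonal $D_r$. A circulant matrix $C$ with first row $(c_0,c_1,\dots,c_{N-1})$ is symmetric precisely when $c_k=c_{-k}$ for all $k$. Since $U^{(r)}$ and $U$ are both real and symmetric, their associated Fourier multipliers are real (the symmetry $c_k=c_{-k}$ on the spatial side corresponds to reality of the eigenvalues $(F^*c)$), so $D_r$ is a real diagonal matrix; a real-diagonal conjugate $FD_rF^*$ is automatically self-adjoint, i.e. symmetric over $\mathbb R$. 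Hence $U^{(r)}U^t$ is symmetric for every $r$, which is condition (4).

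The only genuine subtlety, and the step I would be most careful with, is the bookkeeping at the special indices $k=0$ and (for even $N$) $k=N/2$, where the condition $\gamma_k=\gamma_{-k}$ is automatic and the multiplier reality argument must be checked to hold uniformly; but this is exactly the redundancy already noted in the proof of Proposition 1.6, so no new difficulty arises. An alternative, purely algebraic route avoiding Fourier analysis is to note that for real circulant symmetric matrices $A,B$ one has $(AB)_{ij}=\sum_k A_{ik}B_{kj}$ depending only on $j-i$, and then verify $c_{j-i}=c_{i-j}$ directly from $\gamma_k=\gamma_{-k}$ by reindexing the convolution sum; I would present whichever of the two is cleaner, but the Fourier-diagonalization argument is the more transparent one and ties in directly with Lemma 1.3.
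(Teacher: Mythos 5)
Your proof is correct, but your main argument takes a genuinely different route from the paper's. Both reduce the statement, via Theorem 2.5\,(4), to showing that $U^{(r)}U^t$ is symmetric for each color $r$; the paper then finishes in one line by a direct index substitution: with $D_r=\{k:|\gamma_k|=r\}$ one has $(U^{(r)}U^t)_{ij}=\sum_{s\in D_r}{\rm sgn}(\gamma_s)\gamma_{s+i-j}$, and since $\gamma_i=\gamma_{-i}$ gives $D_r=-D_r$, the change of variable $s\mapsto -s$ turns this sum into $(U^{(r)}U^t)_{ji}$ --- which is precisely the ``purely algebraic route'' you sketch in your last paragraph. Your primary argument instead observes that each color component $U^{(r)}$ is itself circulant and symmetric (correct: $U^{(r)}_{ij}$ depends only on $j-i$, and $\gamma_k=\gamma_{-k}$ passes to signs and moduli), hence by Lemma 1.3 it equals $FD_rF^*$ with $D_r$ real diagonal, so that any product of such matrices is real with real Fourier multipliers, hence Hermitian, hence symmetric; all steps check out, and your worry about $k=0$ and $k=N/2$ is indeed vacuous, as those conditions are automatic. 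What your route buys: it is structural rather than computational, and it proves strictly more --- since all the $U^{(r)}$ are simultaneously diagonalized by $F$ with real spectra, every product $U^{(r)}(U^{(s)})^t$ is symmetric, i.e., circulant symmetric matrices satisfy the stronger condition of Problem 2.8, a fact the paper states in the surrounding discussion without writing out a proof. What the paper's route buys: it is shorter, entirely self-contained, and avoids invoking the Fourier diagonalization at all.
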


\begin{proof}
For a color $r>0$, consider the set of indices where this color appears on the first row, $D_r=\{k||\gamma_k|=r\}$. From $\gamma_i=\gamma_{-i}$ we get $D_r=-D_r$, and so:
$$(U^{(r)}U^t)_{ij}
=\sum_kU^{(r)}_{ik}U_{jk}
=\sum_{s\in D_r}\mathrm{sgn}(\gamma_s)\gamma_{s+i-j}
=\sum_{t\in D_r}\mathrm{sgn}(\gamma_t)\gamma_{t+j-i}
=(U^{(r)}U^t)_{ji}$$

This shows that $U^{(r)}U^t$ is symmetric, and we are done. 
\end{proof}

The above corollary can be regarded as a slight advance on a key problem raised in \cite{bnz}, namely that of characterizing the circulant almost Hadamard matrices.

We have as well the following question, that we believe of interest:

\begin{problem}
What are the matrices $U\in O(N)^*$ having the property that $U^{(r)}(U^{(s)})^t$ is symmetric for any $r,s$, where $U=\sum_{r>0}rU^{(r)}$ is the color decomposition?
\end{problem}

The point is that all the examples of joint critical points of all $p$-norms on $O(N)$ that we have, namely the rescaled Hadamard matrices, the matrices coming from $(a,b,c)$ patterns, and the circulant and symmetric matrices, satisfy in fact this stronger condition.

Observe also that the condition in Problem 2.8, involving just $-1,0,1$ matrices, is purely combinatorial. In fact, what we have there is an axiomatization of some new ``design-type'' combinatorial structure, generalizing the Hadamard matrices.

\section{Local extrema, the rotation trick}

In this section and in the next one we find an algebraic criterion for detecting the $p$-almost Hadamard matrices, by building on the previous work in \cite{bcs} at $p=1$.

The result will basically come from the computation of the Hessian of the $p$-norm on $O(N)$. However, since this $p$-norm is in general not differentiable at points $U\in O(N)$ having zero entries, we first must prove that the local extrema belong to $O(N)^*$.

At $p=1$ this was done in \cite{bcs}, by using a ``rotation trick''. The same trick works in fact at any $p<2$, but with some more calculus needed afterwards, and we have:

\begin{theorem}
If $U\in O(N)$ is a local maximum of the $p$-norm on $O(N)$, for some exponent $p\in[1,2)$, then $U\in O(N)^*$.
\end{theorem}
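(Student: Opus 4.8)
The goal is to show that a local maximizer of the $p$-norm (for $p\in[1,2)$) on $O(N)$ cannot have any zero entries. The natural strategy, following the rotation trick from \cite{bcs} at $p=1$, is to argue by contradiction: assume $U\in O(N)$ is a local maximum but has some entry equal to zero, and produce a nearby orthogonal matrix with strictly larger $p$-norm. The key device is to rotate inside a single pair of columns (or rows) by a small angle $\theta$, which is a continuous curve in $O(N)$ passing through $U$ at $\theta=0$, and then show that the $p$-norm strictly increases to first or second order in $\theta$.

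\medskip

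\noindent \textbf{The rotation.} Concretely, suppose $U_{i j_0}=0$ for some entry. I would pick two columns, say columns $j_0$ and $j_1$, and apply a planar rotation $R_\theta$ acting only on those two coordinates, setting $U_\theta = U R_\theta$. Then $U_\theta$ remains orthogonal, and only the entries in columns $j_0,j_1$ change: each such entry becomes a combination $\cos\theta\, U_{i j_0}\pm\sin\theta\, U_{i j_1}$. The contribution to $\|U_\theta\|_p^p$ from these two columns is $\sum_i\bigl(|\cos\theta\,U_{ij_0}-\sin\theta\,U_{ij_1}|^p+|\sin\theta\,U_{ij_0}+\cos\theta\,U_{ij_1}|^p\bigr)$, and I would analyze this as a function of $\theta$ near $0$. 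The decisive point is the behavior of the function $x\mapsto |x|^p$ at $x=0$: for $p<2$ the term coming from the entry $U_{ij_0}=0$ contributes $|\sin\theta|^p|U_{ij_1}|^p$, which for $p<2$ grows like $|\theta|^p$, i.e.\ \emph{faster} than the quadratic $O(\theta^2)$ losses coming from the nonzero entries. Hence for small $\theta$ the net change is positive, contradicting local maximality. I would choose $j_1$ so that $U_{ij_1}\neq 0$ for the relevant row $i$ (possible since $U$ is invertible, so no column is entirely supported on the zero-rows of column $j_0$), guaranteeing the gain term is genuinely present.

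\medskip

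\noindent \textbf{The calculus.} The clean way to organize the estimate is to expand each surviving term. For an index $i$ with $U_{ij_0}=0$, the pair contributes $|\sin\theta|^p\,|U_{ij_1}|^p+|\cos\theta|^p\,|U_{ij_1}|^p$; the first summand is the crucial gain $\sim|\theta|^p|U_{ij_1}|^p$, while the second loses only $O(\theta^2)$. For indices $i$ with both entries nonzero, a second-order Taylor expansion of $x\mapsto|x|^p$ (valid away from $0$) shows the change is $O(\theta^2)$. Summing, one gets
$$
\|U_\theta\|_p^p-\|U\|_p^p \;=\; c\,|\theta|^p + O(\theta^2),
$$
with $c>0$ a sum of $|U_{ij_1}|^p$ over the zero-positions. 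Since $p<2$ forces $|\theta|^p\gg\theta^2$ as $\theta\to 0$, the right-hand side is strictly positive for small $\theta$, contradicting that $U$ is a local maximum.

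\medskip

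\noindent \textbf{Main obstacle.} The delicate point is ensuring the gain term does not accidentally cancel against losses of the same order, and more importantly guaranteeing one can always choose the rotation pair so that a genuine gain of order $|\theta|^p$ appears. If every zero entry $U_{ij_0}$ lay in a row where all other entries of $U$ were also zero, the rotation would produce no gain — but orthogonality (invertibility) rules this out, since a row cannot be identically zero, and with care the rotation pair can be selected to activate the zero. Handling the boundary exponent $p=1$ cleanly (where $|\theta|^p=|\theta|$ is only one-sided differentiable) also requires examining both signs of $\theta$, but this only strengthens the conclusion since the dominant term is even in the relevant entries. The rest is routine Taylor analysis of $x\mapsto|x|^p$.
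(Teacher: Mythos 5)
Your overall strategy---rotate a zero entry against a nonzero one in a companion column and exploit that $|\theta|^p$ beats $\theta^2$ when $p<2$---is exactly the paper's rotation trick, but your proposal contains a genuine error at its computational core. You claim that for a row $i$ where \emph{both} entries $a=U_{ij_0}$ and $c=U_{ij_1}$ are nonzero, ``a second-order Taylor expansion shows the change is $O(\theta^2)$''. This is false: the derivative of $|\cos\theta\,a-\sin\theta\,c|^p+|\sin\theta\,a+\cos\theta\,c|^p$ at $\theta=0$ equals
$$p\left(a\,\mathrm{sgn}(c)|c|^{p-1}-c\,\mathrm{sgn}(a)|a|^{p-1}\right)=p\,\mathrm{sgn}(ac)\,|a|\,|c|\left(|c|^{p-2}-|a|^{p-2}\right),$$
which vanishes only when $|a|=|c|$. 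So the true expansion is $\|U_\theta\|_p^p-\|U\|_p^p=\kappa|\theta|^p+L\theta+O(\theta^2)$ with a generically nonzero linear term $L$, and since $|\theta|^p=o(|\theta|)$ for $p>1$, the strict positivity you assert for small $\theta$ does not follow from your display. Neutralizing precisely these first-order terms is what the technical bulk of the paper's proof is devoted to: it sorts the doubly-nonzero positions into same-sign pairs ($A_kC_k>0$) and opposite-sign pairs ($B_kD_k<0$), introduces the matrix $V$ obtained by interchanging the two columns (which, up to norm-preserving column operations, amounts to replacing $\theta$ by $-\theta$), and sums the two variations $\varphi(t)+\psi(t)$ so that the odd linear contributions cancel, leaving $\varphi'(t)+\psi'(t)=2pt^{p-1}(\|X\|_p^p+\|Y\|_p^p)+O(t)$, whence $X=Y=0$.

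The gap is fixable along lines you half-anticipate, but for the wrong reason. For $p\in(1,2)$ the function $x\mapsto|x|^p$ is $C^1$ on all of $\mathbb R$, so $f(\theta)=\|U_\theta\|_p^p$ is differentiable even through the zero entries, local maximality forces $f'(0)=L=0$, and then $\kappa|\theta|^p+O(\theta^2)>0$ gives the contradiction. For $p=1$ one must compare both signs of $\theta$: $\varphi(\theta)+\varphi(-\theta)\le 0$ annihilates the odd part $L\theta$ while the gain $\kappa|\theta|$ is even, forcing $\kappa\le 0$. You do mention examining both signs at $p=1$, but you attribute the need to the one-sided differentiability of $|\theta|$ rather than to the uncancelled linear terms, which affect every $p\in[1,2)$ and are the actual obstacle. (Your choice of $j_1$ with $U_{ij_1}\neq 0$ is fine and even streamlines the paper's ending, making $\kappa>0$ directly rather than first proving that the zero patterns of any two columns coincide; but as written, the step your contradiction rests on is exactly the step that fails.)
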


\begin{proof}
Let $U_1,\ldots,U_N$ be the columns of $U$, and let us perform a rotation of $U_1,U_2$:
$$\begin{pmatrix}U^t_1\\ U^t_2\end{pmatrix}=\begin{pmatrix}
\cos t\cdot U_1-\sin t\cdot U_2\\ \sin t\cdot U_1+\cos t\cdot U_2
\end{pmatrix}$$

In order to compute the $p$-norm, let us permute the columns of $U$, in such a way that the first two rows look as follows, with $X_k\neq 0$, $Y_k\neq 0$, $A_kC_k>0$, $B_kD_k<0$:
$$\begin{pmatrix}U_1\\ U_2\end{pmatrix}
=\begin{pmatrix}
0&0&Y&A&B\\
0&X&0&C&D
\end{pmatrix}$$

Let us compute now the following quantity:
\begin{eqnarray*}
\varphi(t)
&=&||U^t||_p^p-||U||_p^p\\
&=&||\cos t\cdot U_1-\sin t\cdot U_2||_p^p+||\sin t\cdot U_1+\cos t\cdot U_2||_p^p-||U_1||_p^p-||U_2||_p^p
\end{eqnarray*}

We have the folowing formula:
\begin{eqnarray*}
\varphi(t)
&=&||\sin t\cdot X||_p^p+||\cos t\cdot Y||_p^p+||\cos t\cdot A-\sin t\cdot C||_p^p+||\cos t\cdot B-\sin t\cdot D||_p^p\\
&+&||\cos t\cdot X||_p^p+||\sin t\cdot Y||_p^p+||\sin t\cdot A+\cos t\cdot C||_p^p+||\sin t\cdot B+\cos t\cdot D||_p^p\\
&-&||X||_p^p-||Y||_p^p-||A||_p^p-||B||_p^p-||C||_p^p-||D||_p^p
\end{eqnarray*}

Thus for $t>0$ small we have:
\begin{eqnarray*}
\varphi(t)
&=&(\sin^pt+\cos^pt-1)(||X||_p^p+||Y||_p^p)\\
&+&||\cos t\cdot A-\sin t\cdot C||_p^p+||\sin t\cdot A+\cos t\cdot C||_p^p-||A||_p^p-||C||_p^p\\
&+&||\cos t\cdot B-\sin t\cdot D||_p^p+||\sin t\cdot B+\cos t\cdot D||_p^p-||B||_p^p-||D||_p^p
\end{eqnarray*}

Now by remembering our conventions $A_kC_k>0$, $B_kD_k<0$, we obtain:
\begin{eqnarray*}
\varphi(t)
&=&(\sin^pt+\cos^pt-1)(||X||_p^p+||Y||_p^p)\\
&+&\sum_k(\cos t|A_k|-\sin t|C_k|)^p+(\cos t|C_k|+\sin t|A_k|)^p-|A_k|^p-|C_k|^p\\
&+&\sum_k(\cos t|B_k|+\sin t|D_k|)^p+(\cos t|D_k|-\sin t|B_k|)^p-|B_k|^p-|D_k|^p
\end{eqnarray*}

Consider now the matrix $V$ obtained by interchanging $U_1,U_2$.  If we perform to it a rotation as above, then the quantity $\psi(t)=||V^t||_p^p-||V||_p^p$ is given by:
\begin{eqnarray*}
\psi(t)
&=&(\sin^pt+\cos^pt-1)(||X||_p^p+||Y||_p^p)\\
&+&\sum_k(\cos t|C_k|-\sin t|A_k|)^p+(\cos t|A_k|+\sin t|C_k|)^p-|A_k|^p-|C_k|^p\\
&+&\sum_k(\cos t|D_k|+\sin t|B_k|)^p+(\cos t|B_k|-\sin t|D_k|)^p-|B_k|^p-|D_k|^p
\end{eqnarray*}

Let us introduce now the following function $\gamma_t$, depending on $a,c\geq 0$:
\begin{eqnarray*}
\gamma_t(a,c)
&=&(\cos t\cdot a+\sin t\cdot c)^p+(\cos t\cdot c+\sin t\cdot a)^p\\
&+&(\cos t\cdot a-\sin t\cdot c)^p+(\cos t\cdot c-\sin t\cdot a)^p\\
&-&2a^p-2c^p
\end{eqnarray*}

With this notation, if we sum the above two formulae of $\varphi,\psi$, we obtain:
\begin{eqnarray*}
\varphi(t)+\psi(t)
&=&2(\sin^pt+\cos^pt-1)(||X||_p^p+||Y||_p^p)\\
&+&\sum_k\gamma_t(|A_k|,|C_k|)+\sum_k\gamma_k(|B_k|,|D_k|)
\end{eqnarray*}

Now observe that the derivative of this quantity is given by:
\begin{eqnarray*}
\varphi'(t)+\psi'(t)
&=&2p(\sin^{p-1}t\cos t-\cos^{p-1}t\sin t)(||X||_p^p+||Y||_p^p)\\
&+&\sum_k\gamma_t'(|A_k|,|C_k|)+\sum_k\gamma_k'(|B_k|,|D_k|)
\end{eqnarray*}

So, let us compute now the derivative of $\gamma_t$:
\begin{eqnarray*}
\gamma_t'(a,c)
&=&p(\cos t\cdot a+\sin t\cdot c)^{p-1}(-\sin t\cdot a+\cos t\cdot c)\\
&+&(\cos t\cdot c+\sin t\cdot a)^{p-1}(-\sin t\cdot c+\cos t\cdot a)\\
&+&(\cos t\cdot a-\sin t\cdot c)^p(-\sin t\cdot a-\cos t\cdot c)\\
&+&(\cos t\cdot c-\sin t\cdot a)^p(-\sin t\cdot c-\cos t\cdot a)
\end{eqnarray*}

By using $\sin t=t+O(t^2)$ and $\cos t=1+O(t^2)$ we obtain:
\begin{eqnarray*}
\gamma_t'(a,c)
&\simeq&p(a+tc)^{p-1}(c-ta)+p(c+ta)^{p-1}(a-tc)\\
&-&p(a-tc)^{p-1}(c+ta)-p(c-ta)^{p-1}(a+tc)
\end{eqnarray*}

By using the power series expansion for the exponentials, this gives:
\begin{eqnarray*}
\frac{\gamma_t'(a,c)}{p}
&\simeq&(a^{p-1}+(p-1)a^{p-2}tc)(c-ta)+(c^{p-1}+(p-1)c^{p-2}ta)(a-tc)\\
&-&(a^{p-1}-(p-1)a^{p-2}tc)(c+ta)-(c^{p-1}-(p-1)c^{p-2}ta)(a+tc)
\end{eqnarray*}

The order 0 terms cancel, and by neglecting the order 2 terms we obtain:
\begin{eqnarray*}
\frac{\gamma_t'(a,c)}{p}
&\simeq&((p-1)a^{p-2}c^2-a^p)t+((p-1)c^{p-2}a^2-c^p)t\\
&-&(a^p-(p-1)a^{p-2}c^2)t-(c^p-(p-1)c^{p-2}a^2)t
\end{eqnarray*}

Now since the upper and lower terms are the same, we obtain:
\begin{eqnarray*}
\frac{\gamma_t'(a,c)}{2pt}
&\simeq&(p-1)a^{p-2}c^2-a^p+(p-1)c^{p-2}a^2-c^p\\
&=&(p-1)(a^{p-2}c^2+a^2c^{p-2})-(a^p+c^p)
\end{eqnarray*}

With these formulae in hand, we claim that $X,Y$ both follow to be null vectors. Indeed, since we are in the case $p\in[1,2)$, the matrices $U,V$ are local maximizers of the $p$-norm. Thus $\varphi,\psi\leq 0$ for $t>0$ small, so we must have $\varphi'+\psi'\leq 0$ for $t>0$ small. But:
$$\varphi'(t)+\psi'(t)=2pt^{p-1}(||X||_p^p+||Y||_p^p)+O(t)$$

Thus we have $||X||_p^p+||Y||_p^p\leq 0$, and so $X,Y$ are both null vectors, as claimed.

Summarizing, we have proved that the $0$ entries of $U_1,U_2$ must appear at the same positions. By permuting the rows of $U$ the same must hold for any two rows $U_i,U_j$. Now since $U\in O(N)$ cannot have zero columns, all its entries must be nonzero, as claimed.
\end{proof}

It is not clear whether the same holds at $p\in(2,\infty)$. Here $U,V$ are local minimizers of the $p$-norm, so $\varphi,\psi\geq 0$ for $t>0$ small, so $\varphi'+\psi'\geq 0$ for $t>0$ small. But:
$$\varphi'(t)+\psi'(t)=-2pt(||X||_1+||Y||_1)+2ptS_{ABCD}+O(t^{1+\varepsilon})$$

Here $S_{ABCD}$ is a sum of quantities of the following type, one for each pair of adjacent entries of $A,C$, and one for each pair of adjacent entries of $B,D$:
$$K(a,c)=(p-1)(a^{p-2}c^2+a^2c^{p-2})-(a^p+c^p)$$

The problem comes from the fact that these quantities, and hence their sum $S_{ABCD}$ as well, can be positive, so that we cannot conclude that we have $||X||_p^p+||Y||_p^p\leq 0$.

The case $p=\infty$ is also very problematic, because when the maximum $M=\max|U_{ij}|$ appears at many places in our matrix, the rotation trick obviously cannot work. In fact, there are many problems here, and the rotation trick at $p=\infty$ seems to require precise information about the positions of the $M$ and $0$ entries in our matrix.

Of course, the fact that the rotation trick might fail at $p\in(2,\infty]$ is not an indication that the conclusion $U\in O(N)^*$ should fail itself, but just of the fact that the good rotation $U^t=Ue^{tA}$ might come from more complicated antisymmetric matrices $A\in M_N(\mathbb R)$.

Here is an example of such a result, excluding a few matrices having zero entries:

\begin{proposition}
An antisymmetric matrix $A \in O(N)$ cannot be a local extremum of the $p$-norm on $O(N)$, for any $p\geq 1$.
\end{proposition}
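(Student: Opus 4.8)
The plan is to show that $A$ fails to be a local extremum by exhibiting an explicit one-parameter family of orthogonal matrices through $A$ along which the $p$-norm is strictly monotone near $t=0$, contradicting extremality. The starting observation is that an antisymmetric $A\in O(N)$ satisfies $A^t=-A$ together with $A^tA=1_N$, hence $A^2=-1_N$; in particular every diagonal entry vanishes, since $A_{ii}=-A_{ii}$. This vanishing diagonal is precisely the feature I would exploit, and it also already shows $A\notin O(N)^*$.

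First I would introduce the path $U_t=\cos t\cdot A+\sin t\cdot 1_N$ and verify that it lies in $O(N)$ for all $t$. This is the step where antisymmetry is essential: in the expansion of $U_t^tU_t$ the two cross terms combine to $\sin t\cos t\,(A^t+A)$, which cancel because $A^t=-A$, while $\cos^2 t\,A^tA+\sin^2 t\,1_N=1_N$. (Equivalently $U_t=Ae^{-tA}$, using $e^{-tA}=\cos t\cdot 1_N-\sin t\cdot A$.) Thus $U_t$ is a genuine curve in $O(N)$ through $A=U_0$.

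Next I would compute $f(t)=||U_t||_p^p$ for $p<\infty$ by separating diagonal from off-diagonal entries. Since $A$ has zero diagonal, on the diagonal $U_t$ reduces to $\sin t\cdot 1_N$, contributing $N|\sin t|^p$, while off the diagonal it reduces to $\cos t\cdot A$, contributing $|\cos t|^p\,||A||_p^p$. Writing $c=||A||_p^p>0$, a routine Taylor expansion gives
$$f(t)=N|\sin t|^p+c\,|\cos t|^p=c+N\,t^p-\frac{cp}{2}\,t^2+o(t^2)\qquad (t\to 0^+).$$
The proof then reduces to comparing the exponents $p$ and $2$: for $p\in[1,2)$ the term $N\,t^p$ dominates, so $f(t)>f(0)$ for small $t\neq 0$ and $A$ is not a local maximum; for $p\in(2,\infty)$ the term $-\tfrac{cp}{2}t^2$ dominates, so $f(t)<f(0)$ and $A$ is not a local minimum. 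In each case the relevant notion of extremum (maximum for $p<2$, minimum for $p>2$) is contradicted.

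The step I expect to be the main, though minor, obstacle is $p=\infty$, where the norm is a maximum rather than a sum and the expansion above does not apply. Here I would argue directly: $||U_t||_\infty=\max\big(|\sin t|,\,|\cos t|\cdot||A||_\infty\big)$, and since $||A||_\infty>0$ while $|\cos t|<1$ for small $t\neq 0$, for $t$ small enough the maximum is attained on the off-diagonal block and equals $|\cos t|\cdot||A||_\infty<||A||_\infty$. Hence the $\infty$-norm strictly decreases along the path, so $A$ is not a local minimum. The only remaining care is to confirm $||A||_p>0$, which is automatic as $A$ is orthogonal, and to note that the excluded value $p=2$ is exactly the degenerate case where $f$ is constant.
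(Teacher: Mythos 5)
Your proof is correct and takes essentially the same route as the paper: your path $U_t=\cos t\cdot A+\sin t\cdot 1_N$ is exactly $Ae^{-tA}$, the paper's rotation in the direction $A$ itself (the paper's displayed formula $e^{tA}=\cos t\cdot A-\sin t\cdot 1_N$ is a typo for $Ae^{tA}$), and your diagonal/off-diagonal split of the norm together with the comparison of the exponents $p$ and $2$ reproduces the paper's argument. The only difference is that you treat $p=\infty$ explicitly via $\|U_t\|_\infty=\max\bigl(|\sin t|,\,|\cos t|\cdot\|A\|_\infty\bigr)$, a case the paper leaves implicit, which is a welcome small addition.
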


\begin{proof}
Since $A$ is orthogonal and antisymmetric, we have $A^2=-AA^t=-1$, and so:
$$e^{tA}=\cos t\cdot A-\sin t\cdot 1_N$$

We analyze, to the first order in $t\to 0$, the following function:
$$||Ae^{tA}||_p^p-||A||_p^p = (|\cos t|^p-1)||A||_p^p+N|\sin t|^p$$

At $p<2$ this function behaves like $N|t|^p$, so $A$ cannot be a local maximum for the $p$-norm, since it is a local minimum in the direction $A$. Similarly, at $p>2$ the norm difference behaves like $-p|t|^2/2$, so $A$ cannot be a local minimum for the $p$-norm.
\end{proof}

\section{The Hessian formula, open problems}

In this section we find an algebraic criterion for detecting the $p$-almost Hadamard matrices. For this purpose, let us first go back to Theorem 2.5 above, and introduce:

\begin{definition}
To any $U\in O(N)^*$ we associate the matrices $L_r=U^{(r)}U^t$ and $R_r=U^tU^{(r)}$, where $U=\sum_{r>0}rU^{(r)}$ is the color decomposition of $U$.
\end{definition}

According to Theorem 2.5 above, in the case where $U$ is a critical point of all the $p$-norms on $O(N)$, the matrices $L_r$ are all symmetric, and the matrices $R_r=U^tL_rU$ follow to be symmetric too. Observe also that we have the following formula:
$$\sum_{r>0}rL_r=\sum_{r>0}rR_r=1$$

We now study the local extrema of the $p$-norm on $O(N)^*$. We use:

\begin{lemma}
Let $U\in O(N)^*$, let $p\in[1,\infty)$, and for $A\in M_N(\mathbb R)$ antisymmetric, set:
$$\varphi(A)=\sum_{ij}|U_{ij}|^{p-2}((p-1)(UA)_{ij}^2+U_{ij}(UA^2)_{ij})$$
Then $U$ is a local maximizer/minimizer of the $p$-norm iff $\sum_{r>0}r^{p-1}Tr(R_rA^t)=0$, and the quantity $\varphi(A)$ is positive/negative, for any $A\in M_N(\mathbb R)$ antisymmetric.
\end{lemma}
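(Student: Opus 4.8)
The plan is to read off the local behaviour of the $p$-norm from the first two terms of the Taylor expansion of $t\mapsto\|Ue^{tA}\|_p^p$ at $t=0$, as $A$ ranges over the antisymmetric matrices. Since $U\in O(N)^*$ has only nonzero entries, the function $V\mapsto\sum_{ij}|V_{ij}|^p$ is smooth in a neighbourhood of $U$ in $O(N)$ for every $p\in[1,\infty)$ (the only non-smoothness of $x\mapsto|x|^p$ sits at $x=0$, which is avoided), and the curves $t\mapsto Ue^{tA}$ exhaust the tangent space $\{UA:A^t=-A\}$ and give a local parametrization of $O(N)$ around $U$. First I would expand the entries: writing $e^{tA}=1+tA+\tfrac{t^2}{2}A^2+O(t^3)$ gives $V_{ij}:=(Ue^{tA})_{ij}=U_{ij}+t(UA)_{ij}+\tfrac{t^2}{2}(UA^2)_{ij}+O(t^3)$, and for small $t$ the sign of $V_{ij}$ is $S_{ij}=\mathrm{sgn}(U_{ij})$. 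Applying the second-order Taylor expansion of $\phi(x)=|x|^p$, with $\phi'(x)=p\,\mathrm{sgn}(x)|x|^{p-1}$ and $\phi''(x)=p(p-1)|x|^{p-2}$, around $x=U_{ij}$ and collecting powers of $t$ produces
\[
\|Ue^{tA}\|_p^p=\|U\|_p^p+t\,c_1(A)+t^2\,c_2(A)+O(t^3),
\]
\[
c_1(A)=p\sum_{ij}\mathrm{sgn}(U_{ij})|U_{ij}|^{p-1}(UA)_{ij},\qquad
c_2(A)=\frac{p}{2}\sum_{ij}|U_{ij}|^{p-2}\big((p-1)(UA)_{ij}^2+U_{ij}(UA^2)_{ij}\big),
\]
where in $c_2$ I have used $\mathrm{sgn}(U_{ij})|U_{ij}|^{p-1}=|U_{ij}|^{p-2}U_{ij}$. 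Thus $c_2(A)=\tfrac{p}{2}\varphi(A)$ on the nose.

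Next I would rewrite the linear term. Using the color-decomposition formula $\mathrm{sgn}(U_{ij})|U_{ij}|^{p-1}=\sum_{r>0}r^{p-1}U^{(r)}_{ij}$ together with the Frobenius identity $\langle U^{(r)},UA\rangle=\langle U^tU^{(r)},A\rangle=\mathrm{Tr}(R_rA^t)$, I obtain $c_1(A)=p\sum_{r>0}r^{p-1}\mathrm{Tr}(R_rA^t)$. Hence $U$ is a critical point of the $p$-norm exactly when $c_1(A)=0$ for all antisymmetric $A$, i.e. when $\sum_{r>0}r^{p-1}\mathrm{Tr}(R_rA^t)=0$; and under this condition the nature of the critical point is governed by the sign of $c_2(A)=\tfrac{p}{2}\varphi(A)$. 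Since $p>0$, a local maximizer corresponds to $\varphi(A)\le 0$ for all antisymmetric $A$ and a local minimizer to $\varphi(A)\ge 0$, which is the asserted criterion. As a sanity check, at $p=1$ the $(p-1)$-term drops and $\varphi(A)=\mathrm{Tr}(S^tU\,A^2)$; writing $A^2=-A^tA$ one sees that $\varphi(A)\le 0$ for all $A$ is precisely the positivity $S^tU>0$, recovering the $p=1$ criterion of Proposition 1.3.

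The step I expect to be the main obstacle is the justification that inspecting only the one-parameter families $Ue^{tA}$ genuinely certifies a local extremum on $O(N)$, rather than extremality along these particular curves. The key observation is that, at a critical point, the second-order coefficient $c_2(A)$ is independent of the curve realizing the tangent vector $UA$: for any other $C^2$ curve $c(t)$ with $c(0)=U$, $c'(0)=UA$, the discrepancy between the two second derivatives pairs the acceleration $c''(0)$ with the differential of the $p$-norm, which vanishes at a critical point. Consequently $A\mapsto\varphi(A)$ is, up to the factor $\tfrac{p}{2}$, the intrinsic Hessian quadratic form of the $p$-norm at $U$, and strict definiteness of this form is equivalent to a strict local extremum. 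The one delicate case is that of degenerate directions $A\neq 0$ with $\varphi(A)=0$, where the second-order test alone is inconclusive and higher-order terms would in principle be needed; these directions should be addressed separately, but they do not affect the stated equivalence once the sign condition on $\varphi$ is read as the semidefiniteness appropriate to the type of extremum.
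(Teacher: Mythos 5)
Your proposal is correct and takes essentially the same route as the paper: both compute the first and second derivatives of $f(t)=\|Ue^{tA}\|_p^p$ at $t=0$ along the curves $Ue^{tA}$ (differentiability being guaranteed by $U\in O(N)^*$), identify $f'(0)=p\sum_{r>0}r^{p-1}\mathrm{Tr}(R_rA^t)$ via the color-decomposition identity $\mathrm{sgn}(U_{ij})|U_{ij}|^{p-1}=\sum_{r>0}r^{p-1}U^{(r)}_{ij}$, and show $f''(0)=p\,\varphi(A)$. Your closing remarks on the curve-independence of the second-order coefficient at a critical point and on degenerate directions merely make explicit what the paper leaves implicit.
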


\begin{proof}
Since the Lie algebra of $SO(N)$ consists of the antisymmetric matrices $A\in M_N(\mathbb R)$, in the neighborhood of $U\in O(N)$ we have matrices of type $Ue^{tA}$, with $A$ antisymmetric, and with $t\in\mathbb R$ close to $0$. So, let us fix $A\in M_N(\mathbb R)$ antisymmetric, and set:
$$f(t)=||Ue^{tA}||_p^p$$

With $S_{ij}={\rm sgn}(U_{ij})$, for $t\in\mathbb R$ close enough to $0$ we have:
$$f(t)=\sum_{ij}|(Ue^{tA})_{ij}|^p=\sum_{ij}(S_{ij}(Ue^{tA})_{ij})^p$$

Now the derivative of this function with respect to $t$ is given by:
\begin{eqnarray*}
f'(t)
&=&\sum_{ij}p|(Ue^{tA})_{ij}|^{p-1}S_{ij}(Ue^{tA})_{ij}'\\
&=&\sum_{ijk}p|(Ue^{tA})_{ij}|^{p-1}S_{ij}U_{ik}(e^{tA})_{kj}'\\
&=&\sum_{ijk}pS_{ij}U_{ik}|(Ue^{tA})_{ij}|^{p-1}(e^{tA})_{kj}'
\end{eqnarray*}

In particular at $t=0$ we obtain the following quantity, whose vanishing corresponds to the first condition in the statement:
\begin{eqnarray*}
f'(0)
&=&p\sum_{ijk}S_{ij}U_{ik}|U_{ij}|^{p-1}A_{kj}\\
&=&p\sum_{ij}S_{ij}|U_{ij}|^{p-1}(UA)_{ij}\\
&=&p\sum_{r>0}c^{r-1}Tr(R_rA^t)
\end{eqnarray*}

Also by using the above formula, let us compute now the second derivative:
$$f''(t)=\sum_{ijk}pS_{ij}U_{ik}((p-1)|(Ue^{tA})_{ij}|^{p-2}S_{ij}(Ue^{tA})_{ij}'(e^{tA})_{kj}'+|(Ue^{tA})_{ij}|^{p-1}(e^{tA})_{kj}'')$$

At $t=0$ now, by using $(e^{tB})'_{|t=0}=B$ for any $B\in M_N(\mathbb R)$, we get:
\begin{eqnarray*}
f''(0)
&=&\sum_{ijk}pS_{ij}U_{ik}((p-1)|U_{ij}|^{p-2}S_{ij}(UA)_{ij}A_{kj}+|U_{ij}|^{p-1}(A^2)_{kj})\\
&=&\sum_{ijk}pU_{ik}|U_{ij}|^{p-2}((p-1)(UA)_{ij}A_{kj}+U_{ij}(A^2)_{kj})\\
&=&\sum_{ij}p|U_{ij}|^{p-2}((p-1)(UA)_{ij}(UA)_{ij}+U_{ij}(UA^2)_{ij})
\end{eqnarray*}

Thus we have $f''(0)=p\varphi(A)$, and this gives the result.
\end{proof}

\begin{theorem}
A matrix $U\in O(N)^*$ is a local maximizer/minimizer of the $p$-norm on $O(N)$, with $p\in[1,\infty)$, if and only if the matrix $\sum_{r>0}r^{p-1}L_r$ is symmetric, and with
\begin{eqnarray*}
Y_{ab,cd}
&=&\delta_{bd}\sum_i((p-1)|U_{ib}|^{p-2}-|U_{ic}|^{p-2})U_{ia}U_{ic}\\
&-&\delta_{bc}\sum_i((p-1)|U_{ib}|^{p-2}-|U_{id}|^{p-2})U_{ia}U_{id}\\
&-&\delta_{ad}\sum_i((p-1)|U_{ia}|^{p-2}-|U_{ic}|^{p-2})U_{ib}U_{ic}\\
&+&\delta_{ac}\sum_i((p-1)|U_{ia}|^{p-2}-|U_{id}|^{p-2})U_{ib}U_{id}
\end{eqnarray*}
the quadratic form $\varphi=\sum_{abcd}Y_{ab,cd}B_{ab}B_{cd}$ is positive/negative.
\end{theorem}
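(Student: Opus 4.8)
The plan is to read both assertions of the theorem off the abstract criterion of Lemma 4.2, which says that $U\in O(N)^*$ is a local maximizer/minimizer of the $p$-norm precisely when $\sum_{r>0}r^{p-1}\mathrm{Tr}(R_rA^t)=0$ and $\varphi(A)$ is positive/negative for every antisymmetric $A$. I would first dispose of the first-order condition. Since $\mathrm{Tr}(R_rA^t)=\sum_{ij}(R_r)_{ij}A_{ij}$, the linear form $\sum_{r>0}r^{p-1}\mathrm{Tr}(R_rA^t)$ equals $\mathrm{Tr}\big((\sum_{r>0}r^{p-1}R_r)A^t\big)$, the Hilbert--Schmidt pairing of $\sum_r r^{p-1}R_r$ with $A$. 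This vanishes for every antisymmetric $A$ exactly when $\sum_r r^{p-1}R_r$ is symmetric, and since $R_r=U^tL_rU$ with $U$ orthogonal, this holds iff $\sum_r r^{p-1}L_r$ is symmetric, which is the first condition in the statement. One may also note, via the color-decomposition identity $S_{ij}\psi(|U_{ij}|)=\sum_r\psi(r)U^{(r)}_{ij}$ with $\psi(r)=r^{p-1}$, that $\sum_r r^{p-1}L_r=WU^t$ with $W_{ij}=\mathrm{sgn}(U_{ij})|U_{ij}|^{p-1}$, recovering the critical-point picture of Lemma 2.3.

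The bulk of the work is turning $\varphi(A)$ into the explicit quadratic form $\sum_{abcd}Y_{ab,cd}B_{ab}B_{cd}$. I would split $\varphi=\varphi_1+\varphi_2$ according to the two summands $(p-1)(UA)_{ij}^2$ and $U_{ij}(UA^2)_{ij}$. Expanding $(UA)_{ij}=\sum_m U_{im}A_{mj}$ gives $\varphi_1(A)=(p-1)\sum_{i,j,m,n}|U_{ij}|^{p-2}U_{im}U_{in}A_{mj}A_{nj}$, whose monomials $A_{mj}A_{nj}$ force a shared column index and produce the naive coefficient $P^{(1)}_{ab,cd}=(p-1)\delta_{bd}\sum_i|U_{ib}|^{p-2}U_{ia}U_{ic}$. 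Likewise $(UA^2)_{ij}=\sum_{m,n}U_{im}A_{mn}A_{nj}$ yields $\varphi_2(A)=\sum_{i,j,m,n}|U_{ij}|^{p-2}U_{ij}U_{im}A_{mn}A_{nj}$ and the naive coefficient $P^{(2)}_{ab,cd}=\delta_{bc}\sum_i|U_{id}|^{p-2}U_{id}U_{ia}$, so that $\varphi(A)=\sum_{abcd}(P^{(1)}+P^{(2)})_{ab,cd}A_{ab}A_{cd}$ holds exactly.

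Finally I would antisymmetrize. Because $A$ is antisymmetric, for any tensor $P$ one has $\sum_{abcd}P_{ba,cd}A_{ab}A_{cd}=-\sum_{abcd}P_{ab,cd}A_{ab}A_{cd}$, and similarly for the $c\leftrightarrow d$ flip; hence replacing $P$ by $Y_{ab,cd}:=P_{ab,cd}-P_{ba,cd}-P_{ab,dc}+P_{ba,dc}$ gives $\sum_{abcd}Y_{ab,cd}A_{ab}A_{cd}=4\varphi(A)$. Carrying out the four sign-permutations on $P^{(1)}+P^{(2)}$ and collecting the results according to the Kronecker deltas $\delta_{bd},\delta_{bc},\delta_{ad},\delta_{ac}$ reproduces verbatim the four lines defining $Y_{ab,cd}$ in the statement: in each slot the $(p-1)|U_{i\bullet}|^{p-2}$ piece comes from $P^{(1)}$ and the plain $|U_{i\bullet}|^{p-2}$ piece from $P^{(2)}$. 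Since $4$ is a positive constant, the explicit form $\sum_{abcd}Y_{ab,cd}B_{ab}B_{cd}$ is positive/negative exactly when $\varphi$ is, and combining this with the first-order analysis and invoking Lemma 4.2 finishes the proof.

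The main obstacle I expect is purely the index bookkeeping in this last step: one must relabel the summation variables consistently so that the four permuted copies of $P^{(1)}+P^{(2)}$ land in the correct delta-slots with the right signs, using $|U_{ic}|^{p-2}U_{ic}U_{ia}=|U_{ic}|^{p-2}U_{ia}U_{ic}$ and the like to merge the $P^{(1)}$- and $P^{(2)}$-contributions within each slot. One must also keep track of the harmless overall factor $4$ and confirm that it does not affect the positivity/negativity dichotomy. The conceptual content, namely reading the two conditions off Lemma 4.2 and antisymmetrizing the Hessian, is otherwise routine.
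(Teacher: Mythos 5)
Your proposal is correct and takes essentially the same route as the paper: both arguments rest entirely on Lemma 4.2, dispose of the first-order condition via the trace pairing $\mathrm{Tr}\bigl(\bigl(\sum_r r^{p-1}R_r\bigr)A^t\bigr)$ together with $R_r=U^tL_rU$, and obtain $Y_{ab,cd}$ by expanding the Hessian $\varphi(A)$ into a coefficient tensor and antisymmetrizing in each index pair. The only difference is organizational --- the paper first merges the two Hessian terms using the antisymmetry of $A$ (relabeling so that $U_{ij}(UA^2)_{ij}$ also contributes monomials $A_{kj}A_{lj}$) and then substitutes $A=B-B^t$, whereas you antisymmetrize the naive tensors $P^{(1)},P^{(2)}$ directly; these are the same computation, your four sign-permuted copies of $P^{(1)}+P^{(2)}$ reproduce the stated $Y_{ab,cd}$ exactly, and the passage from antisymmetric $A$ to arbitrary $B$ is covered by the fact that your $Y$ is antisymmetric in $(a,b)$ and in $(c,d)$, so the form in $B$ sees only the antisymmetric part of $B$.
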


\begin{proof}
Let us look at the two conditions found in Lemma 4.2. The first condition, namely that we have $\sum_{r>0}r^{p-1}Tr(R_rA^t)=0$ for any $A\in M_N(\mathbb R)$ antisymmetric, is equivalent to the first condition in the statement, namely that the matrix $\sum_{r>0}r^{p-1}L_r$ is symmetric.

The quantity found in Lemma 4.2 can be written as:
\begin{eqnarray*}
\varphi
&=&\sum_{ij}|U_{ij}|^{p-2}((p-1)(UA)_{ij}^2+U_{ij}(UA^2)_{ij})\\
&=&\sum_{ijkl}|U_{ij}|^{p-2}((p-1)U_{ik}U_{il}A_{kj}A_{lj}+U_{ij}U_{ik}A_{kl}A_{lj})\\
&=&\sum_{ijkl}|U_{ij}|^{p-2}(p-1)U_{ik}U_{il}A_{kj}A_{lj}-\sum_{ijkl}|U_{il}|^{p-2}U_{il}U_{ik}A_{kj}A_{lj}\\
&=&\sum_{ijkl}((p-1)|U_{ij}|^{p-2}-|U_{il}|^{p-2})U_{ik}U_{il}\cdot A_{kj}A_{lj}
\end{eqnarray*}

Now recall that $A\in M_N(\mathbb R)$ is an arbitrary antisymmetric matrix. So, let us write $A=B-B^t$. In terms of the matrix $B\in M_N(\mathbb R)$, which can be arbitrary, we have:
$$\varphi=\sum_{ijkl}((p-1)|U_{ij}|^{p-2}-|U_{il}|^{p-2})U_{ik}U_{il}(B_{kj}-B_{jk})(B_{lj}-B_{jl})$$

The expression on the right in the above formula is:
\begin{eqnarray*}
X
&=&(B_{kj}-B_{jk})(B_{lj}-B_{jl})\\
&=&\sum_{abcd}(\delta_{kj,ab}-\delta_{jk,ab})(\delta_{lj,cd}-\delta_{jl,cd})B_{ab}B_{cd}\\
&=&\sum_{abcd}(\delta_{bd}\delta_{jkl,bac}-\delta_{bc}\delta_{jkl,bad}-\delta_{ad}\delta_{jkl,abc}+\delta_{ac}\delta_{jkl,abd})B_{ab}B_{cd}
\end{eqnarray*}

It follows that our map $\varphi$ is given by:
\begin{eqnarray*}
\varphi
&=&\sum_{abcd}B_{ab}B_{cd}\sum_{ijkl}((p-1)|U_{ij}|^{p-2}-|U_{il}|^{p-2})U_{ik}U_{il}\\
&&(\delta_{bd}\delta_{jkl,bac}-\delta_{bc}\delta_{jkl,bad}-\delta_{ad}\delta_{jkl,abc}+\delta_{ac}\delta_{jkl,abd})
\end{eqnarray*}

But this gives the formula in the statement, and we are done.
\end{proof}

Observe that at $p=2$ we have $\varphi=0$. Also, in the case where the rescaled matrix $H=\sqrt{N}U$ is Hadamard, the rescaled matrix $\widetilde{Y}=(\sqrt{N})^{p-2}Y$ is given by:
\begin{eqnarray*}
\widetilde{Y}_{ab,cd}
&=&(p-1)\delta_{bd}\sum_iU_{ia}U_{ic}-(p-1)\delta_{bc}\sum_iU_{ia}U_{id}-\delta_{ad}\sum_iU_{ib}U_{ic}+\delta_{ac}\sum_iU_{ib}U_{id}\\
&=&(p-1)\delta_{bd}\delta_{ac}-(p-1)\delta_{bc}\delta_{ad}-\delta_{ad}\delta_{bc}-\delta_{ac}\delta_{bd}\\
&=&(p-2)(\delta_{ac}\delta_{bd}-\delta_{ad}\delta_{bc})
\end{eqnarray*}

Thus the rescaled quadratic form $\widetilde{\varphi}=(\sqrt{N})^{p-2}\varphi$ is given by:
\begin{eqnarray*}
\widetilde{\varphi}_{ab,cd}
&=&(p-2)\sum_{abcd}(\delta_{ac}\delta_{bd}-\delta_{ad}\delta_{bc})B_{ab}B_{cd}\\
&=&(p-2)\sum_{ab}(B_{ab}^2-B_{ab}B_{ba})\\
&=&(p-2)\cdot\frac{1}{2}\sum_{ab}(B_{ab}-B_{ba})^2
\end{eqnarray*}

These computations agree of course with the fact that the $2$-norm is constant on $O_N$, and that the multiples of Hadamard matrices are $p$-almost Hadamard, for any $p$.

Observe that some simplifications appear as well at $p=1$. Here we obtain of course the fact that the matrix $SU^t$ must be positive, as stated in Definition 1.1 above.

In general, the formula in Theorem 4.3 is quite a theoretical one, but can be used on a computer. As an example of potential application, our computer simulations suggest:

\begin{conjecture}
$K_N$ is $p$-almost Hadamard, for any $N$ and $p$.
\end{conjecture}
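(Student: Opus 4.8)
The plan is to check the two conditions of Theorem 4.3 for the orthogonal matrix $U=K_N/\sqrt N$. The first move is to record the clean form $U=2J_N-1_N$ (this is the matrix $U_N$ of Proposition 1.6), which follows at once from $U_{ii}=(2-N)/N$, $U_{ij}=2/N$, and which exhibits $U$ as a reflection, $U^2=1_N$. Since $K_N$ comes from a $(0,1,N-2)$ pattern, Corollary 2.6 already gives that $U$ is a critical point of every $p$-norm, so each $L_r=U^{(r)}U^t$ is symmetric and the first condition of Theorem 4.3, symmetry of $\sum_{r>0}r^{p-1}L_r$, holds for free. Everything therefore reduces to pinning down the sign of the Hessian form $\varphi(A)$ of Lemma 4.2 over antisymmetric $A$: one needs $\varphi\le 0$ for $p<2$ and $\varphi\ge 0$ for $p>2$.

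The second step is to evaluate $\varphi$ explicitly using $U=2J_N-1_N$. Put $c_j=\sum_i A_{ij}$, $C=\sum_j c_j^2$ and $\|A\|^2=\sum_{ij}A_{ij}^2$. Then $(UA)_{ij}=2c_j/N-A_{ij}$ and $(UA^2)_{ij}=2d_j/N-(A^2)_{ij}$ with $d_j=\sum_l c_l A_{lj}$, and using antisymmetry together with $\sum_{ij}(UA)_{ij}^2=\|A\|^2$ (as $U$ is orthogonal) every sum occurring in Lemma 4.2 collapses to a combination of $C$ and $\|A\|^2$. Writing $a=((N-2)/N)^{p-2}$ and $b=(2/N)^{p-2}$ this gives
\[
\varphi(A)=\frac{2(a-b)(N+2p-4)}{N^2}\,C+\frac1N\big((Np-N-2)b-(N-2)a\big)\|A\|^2 .
\]
For antisymmetric $A$ one has the spectral bound $0\le C\le\frac N2\|A\|^2$, both extremes being attained (ratio $0$ at a $3$-cycle, ratio $N/2$ at $A_{ij}=\tfrac12(v_i-v_j)$ with $\sum_i v_i=0$). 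As $\varphi$ is linear in $C/\|A\|^2\in[0,\frac N2]$, it keeps a fixed sign on $O(N)^*$ iff its two endpoint values do, namely
\[
E_1=N^{1-p}\big((Np-N-2)2^{p-2}-(N-2)^{p-1}\big)
\]
at $C=0$ and
\[
E_2=N^{1-p}\big(2(p-1)(N-2)^{p-2}+(N(p-2)-2(p-1))2^{p-2}\big)
\]
at $C=\frac N2\|A\|^2$.

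The conjecture is thus equivalent to the scalar inequalities $E_1,E_2\le 0$ for $p<2$ and $E_1,E_2\ge 0$ for $p>2$, and I expect these to be the main --- indeed decisive --- obstacle. Both vanish at $p=2$, so the natural test is to compare their derivatives there; for $E_1$ one is reduced to the bracket $g(p)=(Np-N-2)2^{p-2}-(N-2)^{p-1}$, with $g(2)=0$ and $g'(2)=N+(N-2)\ln\frac{2}{N-2}$. This derivative is positive for $N\le 9$ but negative for $N\ge 10$, so for $N\ge 10$ the quantity $E_1$ acquires the wrong sign for $p$ on both sides of $2$ (for instance $N=10$, $p=1.9$ gives $\varphi>0$ in the $3$-cycle direction, so $U$ is a strict local minimum there rather than a maximum). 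Consequently a uniform proof does not seem possible as stated: the argument above should establish the conjecture for small $N$, but it strongly indicates that for $N\ge 10$ the statement fails near $p=2$ and must be qualified. Confirming the small-$N$ cases, and locating the exact range of validity of the two inequalities, is where the real work lies.
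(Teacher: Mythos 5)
There is no proof in the paper to compare yours against: the statement is Conjecture 4.4, offered on the strength of computer simulations, and immediately after it the authors reduce the problem to showing that $\|U_NU\|_p^p$ is locally extremal at $U=1_N$ and then write that ``this looks like a quite tricky problem, and we don't have results''. So you should be judged on the correctness of your Hessian analysis, and there I find no error. Your explicit formula for the form $\varphi(A)$ of Lemma 4.2 checks out: with $U=2J_N-1_N$ one has $(UA)_{ij}=2c_j/N-A_{ij}$, hence $\sum_i(UA)_{ii}^2=4C/N^2$, $\sum_{ij}(UA)_{ij}^2=\|A\|^2$, and on the second-order side $\sum_{ij}(UA^2)_{ij}=-C$ and $\sum_i(UA^2)_{ii}=\|A\|^2-2C/N$; assembling these reproduces exactly your coefficients $2(a-b)(N+2p-4)/N^2$ and $((Np-N-2)b-(N-2)a)/N$. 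The endpoint analysis is also right: $C=0$ on a $3$-cycle, $C=\frac{N}{2}\|A\|^2$ for $A_{ij}=\frac12(v_i-v_j)$ with $\sum_iv_i=0$, and affinity in $C/\|A\|^2$ makes the two endpoint signs decisive. I confirmed your counterexample concretely: at $N=10$, $p=1.9$, with $A$ the $3$-cycle one gets $\varphi=4.2b-4.8a\approx +0.025>0$, where $a=(4/5)^{-1/10}$, $b=(1/5)^{-1/10}$, so $f''(0)>0$ along this direction and $K_{10}/\sqrt{10}$ is not a local maximizer of the $1.9$-norm. Your computation is also consistent with the known cases: at $p=1$ the two endpoint values are $E_1\propto -2$ and $E_2\propto -N/2$ for every $N$, recovering the almost Hadamard property of $K_N$ from \cite{bnz}, and at $N=4$ one has $a=b$, reducing to the Hadamard computation after Theorem 4.3.

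The genuine issue is therefore not a gap in your argument but in the statement itself: what you have produced is, modulo routine bookkeeping, a disproof of the conjecture for $N\geq 10$ and $p$ in a punctured neighborhood of $2$, and your own criterion shows the failure is worse than that. For any fixed $N\geq 5$ the term $(N-2)^{p-1}$ eventually dominates $(Np-N-2)2^{p-2}$, so $E_1<0$ for all large $p>2$, where $E_1\geq 0$ is required; already at the physically motivated exponent $p=4$ the bracket equals $12N-8-(N-2)^3$, which vanishes at $N=6$ and is negative for all $N\geq 7$. So the conjecture can survive only for small $N$ (one checks that $N=3$ does: $E_1\propto (3p-5)2^{p-2}-1$ and $E_2\propto 2(p-1)+(p-4)2^{p-2}$ are strictly increasing through their common zero at $p=2$, hence of the right sign on each side). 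Two points to tighten if you write this up: for the negative results, note explicitly that a direction with $\varphi(A)>0$ rules out a local maximum since $f''(0)=p\varphi(A)$ along $Ue^{tA}$; for the positive small-$N$ results, note that strict definiteness of $\varphi$ on all nonzero antisymmetric $A$, combined with criticality (Corollary 2.6), suffices for a genuine local extremum via the exponential chart, and that your two scalar inequalities must then be verified for all $p$, not just near $p=2$. In short: the mathematics is correct, but it shows the conjecture as stated is false and must be restricted, rather than proving it.
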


Regarding a possible direct proof, let $U_N=K_N/\sqrt{N}$, and observe first that for $U\in O(N)$ we have $NJ_NU=(S_j)_{ij}$, where $S_i$ are the sums on the columns of $U$, so:
$$(U_NU)_{ij}=\frac{2}{N}S_j-U_{ij}$$

Thus, we have the following formula for the $p$-norm of a perturbation of $U_N$:
$$||U_NU||_p^p=\sum_{ij}\Big|U_{ii}-\frac{2}{N}S_i\Big|^p$$

The problem is to prove that this quantity is locally minimized/maximized at $U=1_N$. This looks like a quite tricky problem, and we don't have results.

We have as well a series of questions concerning some possible extensions of this conjecture. We know from Corollary 2.6 and from Corollary 2.7 that both classes of matrices ``coming from designs'' and ``circulant and symmetric'' are critical points of all $p$-norms. We believe that the good framework is the ``circulant design'' one, and we have:

\begin{problem}
Consider the matrices in $O(N)$ coming from circulant designs.
\begin{enumerate}
\item What are these matrices, combinatorially speaking?

\item Which of these matrices have nonzero entries?

\item When are these matrices $p$-almost Hadamard?
\end{enumerate}
\end{problem}

In relation with question (1), one remark is that the Fano plane matrix is indeed circulant, so the answer to the problem is certainly not trivial. Question (2) looks easy but is probably not entirely trivial, because we have to exclude here for instance the identity matrix $1_N$. As for (3), this is definitely not trivial, among others because an answer here would probably require a serious combinatorial input, coming from (1).

\end{document}